\newcommand{\Matlab}{\textsc{MATLAB}}
\newcommand{\prox}{\mathbf{prox}}
\newcommand{\diag}{\mathbf{diag}}
\DeclareMathOperator{\proj}{\mathcal{P}}
\DeclareMathOperator{\dom}{\mathbf{dom}}
\DeclareMathOperator*{\argmin}{arg\,min}
\DeclareMathOperator{\sgn}{sgn}
\newcommand{\reals}{\mathbb{R}}
\newcommand{\tnabla}{\widetilde{\nabla}}
\newcommand{\dist}{\mathrm{dist}}
\newcommand{\ie}{i.e.}
\newcommand{\eg}{e.g.}
\def\PC{\proj_{\mathcal C}}
\theoremstyle{plain}
\newtheorem{proposition}{Proposition}
\newtheorem{assumption}{Assumption}
\theoremstyle{remark}
\newtheorem{remark}{Remark}
\begin{document}

\title{Generalized Row-Action Methods for Tomographic Imaging\thanks{This work is part of the project
High-Definition Tomography and it is supported
by Grant No.~ERC-2011-ADG\_20110209 from the European Research Council.}
}

\author{Martin S. Andersen\thanks{
              Department of Applied Mathematics and Computer Science, 
              Technical University of Denmark,
              \texttt{\{mskan,pcha\}@dtu.dk}.} \and Per Christian Hansen\footnotemark[2]}

\maketitle

\begin{abstract} Row-action methods play an important role in
  tomographic image reconstruction. Many such methods can be viewed as
  incremental gradient methods for minimizing a sum of a large number
  of convex functions, and despite their relatively poor global rate
  of convergence, these methods often exhibit fast initial convergence
  which is desirable in applications where a low-accuracy solution is
  acceptable. In this paper, we propose relaxed variants of a class of
  incremental proximal gradient methods, and these variants generalize
  many existing row-action methods for tomographic imaging. Moreover,
  they allow us to derive new incremental algorithms for tomographic
  imaging that incorporate different types of prior information via
  regularization. We demonstrate the efficacy of the approach with
  some numerical examples.

\bigskip
\noindent
\textbf{Keywords}\quad Incremental methods, inverse problems,
regularization, tomographic imaging
\end{abstract}

\section{Introduction}
\label{intro}

Tomographic imaging is an indispensable non-invasive measurement technique
for diagnostics, exploration, analysis, and design;
see \cite{BeBo98}, \cite{Herm09}, \cite{Natt01} and the references therein.
Discretizations of tomographic imaging problems often lead to large sparse systems
of linear equations with noisy data:
  \begin{equation}
  \label{eq:Axb}
    A\, x \simeq b, \qquad  A\in\mathbb{R}^{m \times n}.
  \end{equation}
Here the vector $x$ represents the unknown image,
the vector $b$ is the given (usually inaccurate/noisy) data, and
the matrix $A$ models the forward problem.
There are no restrictions on the dimensions of $A$, and both over- and underdetermined systems
arise in applications, depending on the amount of data generated in a given experiment.

Iterative algorithms are often well-suited for solving the large-scale problem~(\ref{eq:Axb}),
and several classes of methods have emerged \cite{ElNP10,HeMe93}.
They all produce regularized solutions that approximate the exact and unknown solution image
without being too sensitive to the perturbation of the data.

This work focuses on a specific class of so-called row-action methods,
the basic form of which is known as Kaczmarz's method or ART
(algebraic reconstruction technique) \cite{GBH:70,Kac:37}. These
methods have been used for several decades as the core computational
routines for tomographic imaging, and they are recognized for often
having fast initial convergence towards the desired image.  An
important advantage of these methods is that they access the matrix
$A$ one row---or one block---at a time, thus making the methods well
suited for modern computer architectures.

Several extensions of the classical (block) ART methods have been
proposed with the goal of improving certain characteristics of the
reconstructed images.  Of particular interest is the use of total
variation (TV) regularization as a way to better preserve edges and
detail in the image.  For example, Censor, Davidi, and Herman et al.\
\cite{DHC:09} developed a so-called ``perturbation resilient''
framework to incorporate TV regularization into the ART iterations,
while Sidky and Pan \cite{SiP:08} proposed a hybrid algorithm where
ART is combined with the steepest descent method, also to incorporate
TV regularization.

The main goal of this paper is to provide a theoretical and
algorithmic framework for studying and generalizing the ART methods.
The cornerstone of our approach is an interpretation of ART as a
so-called incremental proximal gradient method for convex
optimization.  This allows us to generalize the method (e.g. with the TV
regularization term) in a rigorous way---thus avoiding the heuristic
arguments sometimes found in applications.

The main contribution of this paper is twofold: (i) we propose a
generalization of the incremental proximal gradient framework of
Bertsekas \cite{Ber:11b,Ber:11} that includes a relaxation parameter, and (ii)
using this framework, we propose a class of generalized row-action
methods that allows us to incorporate different kinds of prior
information in the reconstruction problem via regularization.

The paper is organized as follows. In Section~\ref{sec:optimization},
we discuss incremental methods and proximal methods for convex
optimization, and in Section~\ref{sec:relaxed}, we present two relaxed
incremental proximal gradient methods. We discuss some connections
between existing row-action methods and the relaxed incremental
proximal gradient framework in Section~\ref{sec:art}, and in
Section~\ref{sec:artreg}, we consider generalized row-action methods
for data fitting with a regularization term. We present some numerical
results in Section~\ref{sec-numres}, and we conclude the paper in
Section~\ref{sec-conclusions}.

\textit{Notation.}
The $i$th row of $A$ is denoted by $a_i^T$, and
$A^\dagger$ denotes the Moore--Penrose pseudoinverse of $A$.
Given a convex function $f: \reals^n \to \reals \cup \{ \pm \infty \}$,
we denote by $\dom f = \{x \in \reals^n \,|\, f(x) < \infty \}$ the effective
domain of $f$. Finally, $\PC(x)$ denotes the Euclidean projection of
$x\in \reals^n$ on a closed convex set $\mathcal C$ of $\reals^n$, and
$\dist(x,\mathcal C)\equiv \|x - \PC(x) \|_2$ denotes is the Euclidean
distance from $x$ to the set $\mathcal C$.

\section{The Optimization Framework}
\label{sec:optimization}

Many reconstruction problems in tomographic imaging can be expressed as a constrained convex
optimization problem with an objective function that is given as a sum of $m$ convex functions, \ie,
\begin{align}\label{e-incremental-form}
  \begin{array}{ll}
    \mbox{minimize}
    & f(x) \equiv \sum_{i=1}^m f_i(x) \\[1mm]
    \mbox{subject to}
    & x \in \mathcal{C}.
  \end{array}
\end{align}
Here $x \in \reals^n$ is the optimization variable, $f_i : \reals^n
\to \reals$, and $\mathcal C$ is a closed convex subset of $\reals^n$.
The functions $f_i$ 
typically represent 
data-fidelity terms, such as squared residuals and one or more
regularization terms that incorporate prior knowledge.  The set
$\mathcal C$ may represent bounds on the components of $x$; in this
paper we will simply assume that the projection operator $\PC(\cdot)$
associated with the set $\mathcal C$ is cheap to evaluate. 

If the objective function in \eqref{e-incremental-form} is
differentiable with a Lipschitz continuous gradient, the problem can
be solved using an accelerated gradient projection method.
If $x_k$ denotes the $k$th iterate and $f^\star$ is the minimum, then
the error $f(x_k)-f^\star$ is $O(1/k^2)$; see, \eg, \cite{Nes:04}.
For problems with a nonsmooth objective function that is Lipschitz
continuous on a bounded set, the error bound is
$O(1/\sqrt{k})$, and this can be achieved using a projected
subgradient method with a diminishing step-size rule. This
error bound can often be improved by exploiting problem structure.
For example, the accelerated proximal gradient method of Beck and Teboulle
\cite{BeT:09} splits the objective function into a smooth term and a
nonsmooth term, and this method achieves the same error bound as the
accelerated methods for smooth optimization, namely $O(1/k^2)$.

Before we turn to the main subject of the paper in \S\ref{sec:relaxed}, relaxed
incremental proximal gradient methods, we briefly review some necessary material.

\subsection{Incremental Gradient Methods}
\label{subsec:incremental}

When the objective function in \eqref{e-incremental-form} is comprised
of a very large number of functions, 
the cost of computing the gradient (or a subgradient) may be very high. To avoid
computing the full gradient, incremental gradient methods use only the
gradient of a single component of the objective function at iteration
$k$, \ie,
  \[
    x_{k+1} = \PC \left( x_k - t_k \nabla f_{i_k}(x_k) \right) ,
  \]
where $i_k \in \{1,\ldots,m\}$ is the index of the
component used for the update at iteration $k$.
The index $i_k$ is commonly chosen either in a cyclic manner
(\eg, $i_k = (k \mod m) +1$) 
or drawn uniformly at random; 
another possibility is to combine the
cyclic rule with randomization by shuffling the order of the indices
at the beginning of each cycle, 
and empirical
evidence suggests that this works very well in practice~\cite{Ber:11,ReR:12}.

Incremental methods typically have a very slow \emph{asymptotic} rate of
convergence, and like subgradient methods they require a diminishing
step-size rule to ensure convergence.
In tomographic applications, however, we
are more interested in the \emph{initial} rate of convergence
(and the associated semi-convergence \cite{Natt01}), and
this can be very fast for incremental methods compared to their
nonincremental counterparts \cite{Ber:97,Ber:99,Ber:11,EHN:13}.
There are also several examples of hybrid methods
that gradually transition from an incremental method
to a full gradient method in order to combine the fast initial
convergence of the incremental method and the asymptotic rate of the
full gradient method; see, \eg, \cite{Ber:97,BHG:07,FrS:12} and references therein.

\subsection{Proximal Methods}
\label{subsec:proximal}

Given a closed convex function $f : \reals^n \to \reals$,
the proximal operator $\prox_{f}(x) : \reals^n \to \reals^n$ associated with $f$
is defined as follows \cite{Mor:65}
  \begin{align}
  \label{e-prox-map}
    \prox_{f}(x) = \argmin_{u \in \reals^n} \left\{ f(u) + \nicefrac{1}{2} \| u - x \|_2^2 \right\}.
  \end{align}
The first-order optimality condition associated with the minimization
in \eqref{e-prox-map} can be expressed as
$x - u \in \partial f(u)$,
where $\partial f(u)$ denotes the subdifferential of $f$ at $u \in \dom f$,
defined as
  \begin{align}\label{e-subdifferential}
    \partial f(u) = \{w \in \reals^n \,|\, f(y) \geq f(u) + w^T(y-u) \}.
  \end{align}
In particular, if $f$ is differentiable at $u$, then $\partial f(u)$
is the singleton $\{\nabla f(u)\}$ where $\nabla f(u)$ denotes the
gradient of $f$ at $u$.  It follows 
that if $x$ is a fixed-point of $\prox_f(x)$ (\ie, if $x = \prox_f(x)$),
then $0 \in \partial f(x)$ and hence $x$ is a minimizer of $f$. In
other words, minimizing $f$ is equivalent to finding a fixed-point of
$\prox_f(x)$.

The proximal operator associated with the indicator function of a
closed convex set $\mathcal C$ of $\reals^n$, defined as
  \[
    I_{\mathcal C}(x) =
    \begin{cases}
      0 & x \in \mathcal C\\
      \infty & \text{otherwise,}
    \end{cases}
  \]
is simply the Euclidean projection of $x$ on $\mathcal C$, \ie,
$\prox_{I_\mathcal C}(x) = \argmin_{x \in \mathcal C} \| u - x\|_2^2 = \PC(x)$.
It is therefore natural to view the proximal operator associated with a closed
convex function $f$ as a generalized projection operator.

The proximal point method, proposed by Martinet
\cite{Mar:70,Mar:72} in the early 1970s and further studied by
Rockafellar \cite{Roc:76}, is a method for solving monotone inclusion
problems of the form $0 \in T(x)$ where $T$ is a maximal monotone operator.
Since the subdifferential operator $\partial f$ associated
with a closed convex function $f$ is maximal monotone~\cite{Roc:70a},
the proximal point algorithm can be used to solve the
inclusion problem $0 \in \partial f(x)$. For this problem, the
proximal point algorithm can be written as 
  \begin{align}\label{e-prox-point}
    x_{k+1} = \prox_{t_k f}(x_k) = \argmin_{u \in \reals^n} \left\{ t_k
    f(u) +\nicefrac{1}{2} \|u - x_k\|_2^2 \right\} ,
  \end{align}
where $\{t_k\}$ is a sequence of positive parameters. It follows from
the optimality condition $(x_k-u)/t_k \in \partial f(u)$ associated with
the minimization in \eqref{e-prox-point} that the proximal point
algorithm can be expressed as
  \[
    x_{k+1} = x_k - t_k \tnabla f(x_{k+1}) ,
  \]
where $\tnabla f(x_{k+1}) = (x_k-x_{k+1})/t_k$ is a subgradient that belongs to the
subdifferential $\partial f (x_{k+1})$, and $t_k$ is an \emph{implicit}
step-size parameter. Thus, the proximal point method
can be viewed as an \emph{implicit} (sub)gradient method, and unlike
the standard gradient method, it converges for any positive sequence $\{t_k\}$,
provided that a minimum exists.

\subsection{Incremental Proximal Gradient Methods}
\label{subsec:relaxed}

The incremental proximal gradient methods of Bertsekas \cite{Ber:11}
seek to minimize a sum $f(x) = \sum_{i=1}^m f_{i}(x)$ over a closed
convex subset $\mathcal C$ of $\reals^n$, and each $f_i$ is a sum of two convex functions $f_i(x) = g_i(x) + h_i(x)$
with $g_i : \mathcal \reals^n \to \reals$ and $h_i : \reals^n \to
\reals$, \ie,
  \begin{align} \label{e-opt-sum-of-functions}
    \begin{array}{ll}
      \mbox{minimize}
      & f(x) \equiv \sum_{i=1}^m \bigl( g_i(x) + h_i(x) \bigr) \\[1mm]
      \mbox{subject to}
      & x \in \mathcal C.
    \end{array}
  \end{align}
We will assume that each of the functions $g_i$ possesses ``favorable structure'' so
that the proximal minimization $\prox_{g_i}(x)$ is easy
to solve or has a closed-form solution.

Given the $k$th iterate $x_{k}$, an index $i_k \in \{1,\ldots,m\}$,
and a step size $t_k > 0$, Bertsekas' first incremental proximal
method, which we denote IPG1, computes $x_{k+1}$ as follows
\begin{subequations}
  \begin{align}
    \label{e-ipg1-1}
\text{(IPG1)}& &    z_{k} &= \prox_{t_k g_{i_k}}( x_{k} ) && \\
    \label{e-ipg1-2}
&&    x_{k+1} &= \PC \bigl( z_{k} - t_k \tnabla
      h_{i_k}(z_{k}) \bigr). &&
  \end{align}
\end{subequations}
Here $\tnabla h_{i_k}(z_{k})$ denotes a subgradient of $h_{i_k}$ at
$z_{k}$, \ie, $\tnabla h_{i_k}(z_{k}) \in \partial
h_{i_k}(z_{k})$. From the optimality conditions associated with \eqref{e-ipg1-1} and
since $z_{k}$ is unique, we have that $z_{k} = x_{k} - t_k
\tnabla g_{i_k}(z_{k})$ for some $\tnabla g_{i_k}(z_{k})$ in the
subdifferential of $g_{i_k}$ at $z_{k}$. Substituting this expression for $z_{k}$ in
\eqref{e-ipg1-2}, we obtain the following equivalent expression for $x_{k+1}$:
\begin{align}
  \label{e-ipg1-2-alt}
  x_{k+1} &= \PC \bigl( x_{k} - t_k \tnabla  f_{i_k}(z_{k}) \bigr).
\end{align}
Thus, IPG1 can be viewed as an incremental extragradient-like method
where $x_{k+1}$ is obtained by first computing a ``predictor''
$z_{k}$, followed by a subgradient step based on a subgradient of
$f_{i_k}$, evaluated at the predictor $z_{k}$ instead of at the
current iterate $x_{k}$.

The second incremental proximal method of Bertsekas, which we will
refer to as IPG2, can be expressed as the iteration
\begin{subequations}
  \begin{align}
    \label{e-ipg2-1}
\text{(IPG2)} &&
    z_{k} &= x_{k} - t_k \tnabla h_{i_k}(x_{k}) &&\\
    \label{e-ipg2-2}
 &&   x_{k+1} &= \prox_{t_k \tilde g_{i_k}} (z_{k} ), &&
  \end{align}
\end{subequations}
where $\tilde g_{i_k}(x) = g_{i_k}(x) + I_{\mathcal C}(x)$. If we substitute \eqref{e-ipg2-1} for $z_{k}$ in \eqref{e-ipg2-2}, we obtain
the equivalent formulation
\begin{align}
  \label{e-ipg2-alt}
  x_{k+1} &= \argmin_{x \in \mathcal C} \Bigl\{ g_{i_k}(x) +
    \tnabla h_{i_k}(x_{k})^Tx +
      \frac{1}{2t_k} \|x - x_{k}\|_2^2 \Bigr\}.
\end{align}
As pointed out in \cite{Ber:11}, IPG2 can be viewed as an incremental version
of the iterative shrinkage/thresholding algorithm \cite{CDNL:98,DDD:04}.
Alternatively, IPG2 can be viewed as an incremental
proximal algorithm with \emph{partial} linearization of the component
function $f_i$ (\ie, only $h_i$ is linearized).

Both of the methods IPG1 and IPG2 reduce to the same incremental (sub)gra\-dient
algorithm if $g_{i}(x) = 0$ for all $i$.
However, if $h_i(x) = 0 $ for
all $i$, we obtain two slightly different incremental proximal methods
in general: IPG1 involves an unconstrained optimization which is followed by
an explicit projection on $\mathcal C$ whereas IPG2 includes the
constraint $x \in \mathcal C$ in the minimization \eqref{e-ipg2-alt}.

\section{Relaxed Incremental Proximal Gradient Methods}
\label{sec:relaxed}
It is well-known that the performance of many classical row-action
methods for tomographic imaging depend strongly on a relaxation
parameter. Motivated by this, we now propose relaxed variants of the
incremental proximal gradient methods.

\subsection{A Modified IPG2 Method}

In some applications, the constraint $x\in
\mathcal C$ in \eqref{e-ipg2-alt} prohibits a closed-form solution or
cheap computation of the solution to the proximal minimization, and in
such applications, IPG1 may be more suitable than IPG2. To overcome
this limitation of IPG2, we propose a modified variant of IPG2 which
omits the constraint $x \in \mathcal C$ from the minimization
\eqref{e-ipg2-alt} and instead adds a projection step, \ie,
\begin{subequations} \label{e-ipg2-mod}
\begin{align}
  z_{k} &= \argmin_{x \in \reals^n} \left\{ g_{i_k}(x) +
    \tnabla h_{i_k}(x_{k})^Tx + 
    \frac{1}{2t_k} \|x - x_{k}\|_2^2
  \right\} \\
  x_{k+1} &= \PC ( z_{k} ) ,
\end{align}
\end{subequations}
or equivalently, if we combine the two steps,
  \[
    x_{k+1} = \PC  \Bigl( \prox_{t_k g_{i_k}} \bigl( x_{k} -
    t_k \tnabla h_{i_k}(x_{k}) \bigr) \Bigr).
  \]
Note that this modified version of IPG2 is equivalent to IPG1 when
$h_i(x) = 0$ for all~$i$.

\subsection{The R-IPG1 and R-IPG2 Methods}

We are now ready to propose relaxed variants of IPG1 and the modified
IPG2 in \eqref{e-ipg2-mod}. The relaxed variant of IPG1, which we will call R-IPG1, depends on a
relaxation parameter $\rho \in (0,2)$, and it is defined as the iteration
\begin{subequations}\label{e-ripg1}
  \begin{align}
    \label{e-ripg1-1}
 \text{(R-IPG1)}  &&    w_{k} &= \prox_{t_k g_{i_k}} ( x_{k} ) &&\\
    \label{e-ripg1-2}
&&   z_{k} &= w_{k} - t_k \tnabla h_{i_k}(w_{k}) &&\\
    \label{e-ripg1-3}
  &&  x_{k+1} &= \PC \bigl( \rho z_{k}+ (1-\rho) x_{k} \bigr). &&
  \end{align}
\end{subequations}
Similarly, R-IPG2 refers to the relaxed variant of
\eqref{e-ipg2-mod}, and it is defined as
\begin{subequations} \label{e-ripg2}
  \begin{align}
\label{e-ripg2-1}
\text{(R-IPG2)} &&    w_{k} &= x_{k} - t_k \tnabla h_{i_k}(x_{k}) &&\\
\label{e-ripg2-2}
&&    z_{k} & = \prox_{t_k g_{i_k}} ( w_{k} ) &&\\
\label{e-ripg2-3}
  &&  x_{k+1} &= \PC \bigl( \rho z_{k} + (1-\rho) x_{k} \bigr). &&
  \end{align}
\end{subequations}
Notice that the relaxed algorithms \eqref{e-ripg1} and \eqref{e-ripg2}
are very similar, and they differ only in the order of the first two
updates at each iteration.

\begin{remark}
  It is easy to verify that the two relaxed methods produce the exact
  same sequence $\{x_k\}$ if either $h_i(x) = 0$ or $g_i(x) = 0$ for
  all $i$. In the latter case, both R-IPG1 and R-IPG2 reduce to a
  projected (sub)gradient method with step size $\rho t_k$, and this
  implies that the relaxation parameter is redundant when $g_i(x) = 0$
  for all $i$.
\end{remark}

\subsection{Convergence Results}
\label{subsec:convergence}

We now address the convergence properties of R-IPG1 and R-IPG2 using
cyclic control. Following the exposition in \cite{Ber:11}, we will make the following
assumptions about the functions $g_i$ and $h_i$ and their
(sub)gradients.
\begin{assumption}{\bf (R-IPG1)} \label{assumption-ripg1}
There exists a constant $c$ such that for all $k$,
  \begin{align}
  \max\bigl\{ \| \tnabla g_{i_k}( w_{k} ) \|_2 ,  \| \tnabla
  h_{i_k}(w_{k}) \|_2 \bigr\} \leq c
\end{align}
and for all $k$ that mark the beginning of a cycle, we have for all $j = 1,\ldots,m$,
\begin{align}
  \max\left\{ g_j(x_{k}) - g_j(w_{k+j-1}), h_j(x_{k}) - h_j(w_{k+j-1}) \right\}
  \leq c\, \| x_{k} - w_{k+j-1} \|_2 .
\end{align}
\end{assumption}
\begin{assumption}{\bf (R-IPG2)} \label{assumption-ripg2}
There exists a constant $c$ such that for all $k$,
  \begin{align}
  \max \bigl\{ \| \tnabla g_{i_k}(z_{k}) \|_2 , \| \tnabla
    h_{i_k}(x_{k}) \|_2 \bigr\} \leq c
\end{align}
and for all $k$ that mark the beginning of a cycle, we have for all $j=1,\ldots,m$,
\begin{gather}
  \max \left\{  g_j(x_{k}) - g_j(x_{k+j-1}) ,  h_j(x_{k}) -
    h_j(x_{k+j-1})  \right \} \leq c \,\| x_{k} - x_{k+j-1} \|_2 \\
  g_j(x_{k+j-1}) - g_j(z_{k+j-1}) \leq c \,\|x_{k+j-1}
  -z_{k+j-1} \|_2 .
\end{gather}
\end{assumption}

\begin{remark}
Assumptions \ref{assumption-ripg1} and \ref{assumption-ripg2} are
satisfied if, for example, all $g_i$ and $h_i$ are Lipschitz
continuous on $\reals^n$, or if the sequences $\{x_k\}$ and $\{w_k\}$
(in the case of R-IPG1) or $\{x_k\}$ and $\{z_k\}$ (in case of R-IPG2)
are bounded. See \cite{Ber:11} for further details.
\end{remark}

A key component of the convergence analysis is the following generalization of Proposition~3 in \cite{Ber:11}.
\begin{proposition} \label{prop-ripg-bound}
Let $\{x_k\}$ be a sequence generated by either
\eqref{e-ripg1} or \eqref{e-ripg2} with the index $i_k$ chosen according
to the cyclic rule $i_k = (k \mod{m}) +1$. Then, given a point $y \in
\mathcal C$ and a relaxation parameter $\rho \in [\delta, 2 -
\delta]$ for some $\delta > 0$,
\begin{align} \label{e-prop-ripg-bound}
  \| x_{k+m} - y \|_2^2 \leq \| x_{k} - y \|_2^2 - 2\rho t_k (f(x_{k}) -
  f(y)) + \beta \rho^2 t_k^2m^2c^2
\end{align}
where
  \[
    \beta = \left\{ \begin{array}{ll}
      4+ \frac{1-\rho + \alpha}{\rho m} & \quad \textrm{for R-IPG1~\eqref{e-ripg1}} \\[2mm]
      4 + \frac{4(1-\rho) + \alpha}{\rho m} & \quad \textrm{for R-IPG2~\eqref{e-ripg2}} ,
    \end{array} \right.
  \]
in which $\alpha$ is a constant defined as
\begin{align*}
  \alpha =
  \begin{cases}
    1/(2-\rho) & \delta \leq \rho \leq 3/2 \\
    4(1-\rho) & 3/2 < \rho \leq 2-\delta.
  \end{cases}
\end{align*}
\end{proposition}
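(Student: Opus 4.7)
The plan is to adapt the proof of Proposition~3 in \cite{Ber:11} (which covers the unrelaxed case $\rho=1$) so as to track the dependence on the relaxation parameter. The overall strategy is to derive a one-step recursion for $\|x_{k+1}-y\|_2^2$ and then telescope over a full cycle of $m$ iterations.

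For the single-step analysis, I would start from the relaxed projection $x_{k+1}=\PC\bigl(\rho z_k+(1-\rho)x_k\bigr)$. Using the nonexpansiveness of $\PC$ (together with $y\in\mathcal C$) and the algebraic identity
\begin{align*}
\|\rho a+(1-\rho)b\|_2^2=\rho\|a\|_2^2+(1-\rho)\|b\|_2^2-\rho(1-\rho)\|a-b\|_2^2 ,
\end{align*}
one obtains $\|x_{k+1}-y\|_2^2\leq \rho\|z_k-y\|_2^2+(1-\rho)\|x_k-y\|_2^2-\rho(1-\rho)\|z_k-x_k\|_2^2$. In both schemes $z_k=x_k-t_k(\tnabla g_{i_k}+\tnabla h_{i_k})$ for appropriate subgradient choices, so expanding $\|z_k-y\|_2^2$ and invoking the subgradient inequalities for $g_{i_k}$ and $h_{i_k}$ at the appropriate reference points ($w_k$ for R-IPG1, and $x_k$ resp.\ $z_k$ for R-IPG2) yields a per-iteration estimate of the form
\begin{align*}
\|x_{k+1}-y\|_2^2 \leq \|x_k-y\|_2^2 - 2\rho t_k\bigl(f_{i_k}(\hat x_k)-f_{i_k}(y)\bigr) + \rho t_k^2\cdot O(c^2),
\end{align*}
where $\hat x_k$ denotes the reference point at which progress is measured.

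Next I would telescope this inequality over $j=1,\ldots,m$ using the cyclic rule $i_{k+j-1}=j$. The result couples $\sum_{j=1}^m\bigl(f_j(\hat x_{k+j-1})-f_j(y)\bigr)$ with $\|x_{k+m}-y\|_2^2$, but the statement requires $f(x_k)-f(y)=\sum_{j=1}^m\bigl(f_j(x_k)-f_j(y)\bigr)$. To bridge this discrepancy I would split $f_j(x_k)-f_j(\hat x_{k+j-1})=\bigl(g_j(x_k)-g_j(\hat x_{k+j-1})\bigr)+\bigl(h_j(x_k)-h_j(\hat x_{k+j-1})\bigr)$ and apply Assumption~\ref{assumption-ripg1} or~\ref{assumption-ripg2} to bound each piece by $c\|x_k-\hat x_{k+j-1}\|_2$. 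The iterate drift is then controlled by telescoping the one-step updates and using the subgradient bound; each update has norm $O(\rho t_k c)$, so $\|x_k-\hat x_{k+j-1}\|_2=O(\rho t_k j c)$ and the aggregate contribution is of order $\rho^2 t_k^2 m^2c^2$, producing the leading $4\rho^2 t_k^2 m^2 c^2$ term in the claimed bound.

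The main obstacle is controlling the residual $-\rho(1-\rho)\|z_k-x_k\|_2^2$, which is beneficial when $\rho\leq 1$ but adversarial when $\rho>1$; this is precisely what forces the piecewise definition of $\alpha$. The two regimes correspond to two different ways of estimating this contribution: for $\delta\leq\rho\leq 3/2$ I would apply a Young-type bound with parameter $2-\rho$ to absorb a cross term against the progress, yielding $\alpha=1/(2-\rho)$, while for $3/2<\rho\leq 2-\delta$ the brute-force estimate $\|z_k-x_k\|_2\leq 2t_kc$ combined with the factor $|\rho(1-\rho)|$ is tighter and yields $\alpha=4(1-\rho)$. The discrepancy between the constants $\beta$ for R-IPG1 and R-IPG2 arises from the order in which the proximal and subgradient steps are performed: in R-IPG1 both $\tnabla g_{i_k}$ and $\tnabla h_{i_k}$ can be evaluated at the common point $w_k$, whereas in R-IPG2 the subgradient of $h_{i_k}$ is taken at $x_k$ while that of $g_{i_k}$ is taken at $z_k$, introducing an additional $\|x_k-z_k\|_2$ term through the triangle inequality that inflates the coefficient of $(1-\rho)$ in the error. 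Collecting all contributions then yields the stated values of $\beta$ and completes the proof.
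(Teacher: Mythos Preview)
Your overall architecture---nonexpansiveness of $\PC$, the convex-combination identity, expanding $\|z_k-y\|_2^2$, using subgradient inequalities at the appropriate reference point, telescoping over a cycle, and bounding the accumulated drift via Assumption~\ref{assumption-ripg1}/\ref{assumption-ripg2}---matches the paper's proof and would work. The drift estimate you sketch (each step moves by at most $2\rho t_k c$, so $\|x_k-\hat x_{k+j-1}\|_2 = O(\rho t_k j c)$, giving the leading $4\rho^2 t_k^2 m^2 c^2$) is exactly how the paper produces the dominant term in $\beta$.

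Where your outline goes astray is the mechanism for $\alpha$. You describe the obstacle as the single term $-\rho(1-\rho)\|z_k-x_k\|_2^2$ and propose to handle it by a Young-type bound for $\rho\le 3/2$ and by the brute-force estimate $\|z_k-x_k\|_2\le 2t_kc$ times $|\rho(1-\rho)|$ for $\rho>3/2$. But once you expand $\|z_k-y\|_2^2$, that term merges with $\rho t_k^2\|\tnabla f_{i_k}\|_2^2$ to give $\rho^2 t_k^2\|\tnabla f_{i_k}\|_2^2$; what actually survives (for R-IPG1) is the combination
\[
\rho^2 t_k^2 \|\tnabla f_{i_k}(w_k)\|_2^2 + 2\rho t_k\,\tnabla f_{i_k}(w_k)^T(w_k-x_k),
\]
where the second piece comes from moving the subgradient reference from $x_k$ to $w_k$. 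Using $w_k-x_k=-t_k\tnabla g_{i_k}(w_k)$, this equals
\[
\rho t_k^2\bigl(\rho\|\tnabla h_{i_k}\|_2^2 - (2-\rho)\|\tnabla g_{i_k}\|_2^2 + 2(\rho-1)\,\tnabla g_{i_k}^T\tnabla h_{i_k}\bigr).
\]
The paper bounds the cross term via Cauchy--Schwarz and $\|\tnabla h_{i_k}\|_2\le c$, obtaining a \emph{concave} quadratic in $s=\|\tnabla g_{i_k}\|_2$ on $[0,c]$; the piecewise definition of $\alpha$ corresponds precisely to whether the unconstrained maximizer $s^\ast=c\,|1-\rho|/(2-\rho)$ lies in $[0,c]$ (giving $\alpha=1/(2-\rho)$) or at the boundary $s=c$ (giving $\alpha=4(\rho-1)$, with the threshold at $\rho=3/2$). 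Your ``brute force on $-\rho(1-\rho)\|z_k-x_k\|_2^2$'' alone would \emph{not} recover $\alpha=4(\rho-1)$, because it drops the beneficial $-(2-\rho)\|\tnabla g_{i_k}\|_2^2$ term that the paper exploits. Likewise, the difference between the two values of $\beta$ in the paper is not a triangle-inequality artifact of evaluating subgradients at different points, but a concrete extra per-iteration term $2\rho t_k\bigl(g_{i_k}(x_k)-g_{i_k}(z_k)\bigr)$ that arises only in R-IPG2 and contributes $4m\rho t_k^2 c^2$ over a cycle; this is what replaces $(1-\rho)$ by $4(1-\rho)$ in $\beta$.
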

\begin{proof}
  See Appendix \ref{app-proof-prop}.
\end{proof}
\begin{remark} If we let $\rho = 1$ in
  Proposition~\ref{prop-ripg-bound}, then we obtain $\beta = 4 + 1/m$
  for both of the relaxed methods.  The constant for IPG2 derived in
  \cite{Ber:11} is $\beta = 4 + 5/m$; this discrepancy arises because
  of an approximation $m^2 \approx m^2-m$ in the proof in
  \cite{Ber:11}, and without this approximation we obtain $\beta = 4 +
  1/m$ for both IPG1 and IPG2. Figure~\ref{fig-beta} shows the
  constant $\beta$ as a function of $\rho$ for both R-IPG1 and R-IPG2
  and different values of $m$.
\end{remark}
\begin{figure}
  \centering
  \subfloat[R-IPG1]{\begin{tikzpicture}
	\begin{semilogyaxis}[
          height=1.8in,
          width=2.4in,
          xmin=0,xmax=2,
          ymin=3.2,ymax=40, 
          ytick={4,8,16,32},
          yticklabels={4,8,16,32},
          xlabel=$\rho$,
          ylabel=$\beta$,
          ylabel style={yshift=-0.2in},
          legend style={draw=none,cells={anchor=west},anchor=north east, at={(0.95,0.95)},font=\small}
          ]
          
          \addplot[thick, dotted, color=red, mark=none, domain=1e-3:2-1e-3, samples=200] 
          {4 + (min(2,1./(2-x)) + 4*max(0,x-3/2) + (1-x))/(x*1)};
          \addlegendentry{$m=1$}
          
          \addplot[thick, dashed, color=blue, mark=none, domain=1e-3:2-1e-3, samples=200] 
          {4 + (min(2,1./(2-x)) + 4*max(0,x-3/2) + (1-x))/(x*4)};
          \addlegendentry{$m=4$}          

          \addplot[thick, color=black, mark=none, domain=1e-3:2-1e-3, samples=200] 
          {4 + (min(2,1./(2-x)) + 4*max(0,x-3/2) + (1-x))/(x*100)};
          \addlegendentry{$m=100$}

	\end{semilogyaxis}
\end{tikzpicture}}
  \quad
  \subfloat[R-IPG2]{\begin{tikzpicture}
	\begin{semilogyaxis}[
          height=1.8in,
          width=2.4in,
          xmin=0,xmax=2,
          ymin=3.2,ymax=40, 
          ytick={4,8,16,32},
          yticklabels={4,8,16,32},
          xlabel=$\rho$,
          ylabel=$\beta$,
          ylabel style={yshift=-0.2in},
          legend style={draw=none,cells={anchor=west},anchor=north east, at={(0.95,0.95)},font=\small}
          ]
          
          \addplot[thick, dotted, color=red, mark=none, domain=1e-3:2-1e-3, samples=200] 
          {4 + (min(2,1./(2-x)) + 4*max(0,x-3/2) + 4*(1-x))/(x*1)};
          \addlegendentry{$m=1$}
          
          \addplot[thick, dashed, color=blue, mark=none, domain=1e-3:2-1e-3, samples=200] 
          {4 + (min(2,1./(2-x)) + 4*max(0,x-3/2) + 4*(1-x))/(x*4)};
          \addlegendentry{$m=4$}          

          \addplot[thick, color=black, mark=none, domain=1e-3:2-1e-3, samples=200] 
          {4 + (min(2,1./(2-x)) + 4*max(0,x-3/2) + 4*(1-x))/(x*100)};
          \addlegendentry{$m=100$}

	\end{semilogyaxis}
\end{tikzpicture}}
  \caption{The two plots show the magnitude of the constant $\beta$ in
    the bound \eqref{e-prop-ripg-bound} as a function of $\rho$ for
    each of the relaxed incremental methods. Notice that, in both
    cases, $\beta \approx 4$ over a wide interval when $m$ is large.}
  \label{fig-beta}
\end{figure}
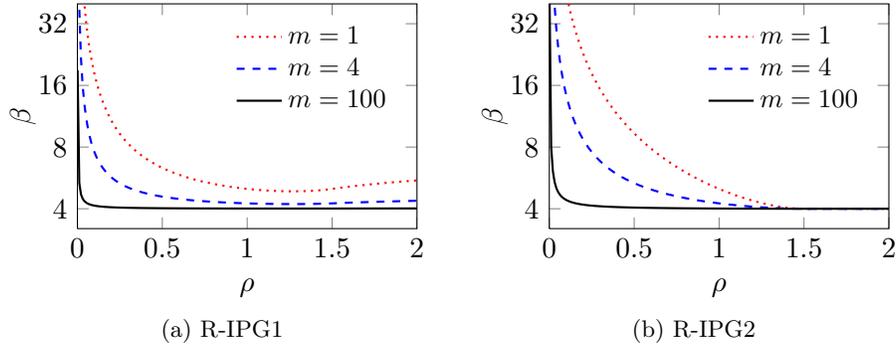

The following proposition summarizes the main convergence results for
problems where $f(x)$ is bounded below and using cyclic control.
\begin{proposition}\label{prop-conv} Let $\{x_k\}$ be a sequence generated by either
  \eqref{e-ripg1} or \eqref{e-ripg2}, and suppose $f(x)$ is bounded
  below ($f^\star > -\infty$). Then, using cyclic control we have an
  error bound for constant step size $t_k = t$
 \[ \liminf_{k\rightarrow \infty} \, f(x_k) = f^\star + \frac{\rho t \beta
     m^2c^2 }{2}\]
and exact convergence for a diminishing step-size rule that satisfies $\sum_{k=1}^\infty t_k =
  \infty$ and $ \lim_{k\rightarrow \infty} t_k = 0$, \ie,
\[  \liminf_{k\rightarrow \infty} \, f(x_k) = f^\star. \]
\end{proposition}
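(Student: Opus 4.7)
The plan is to deduce both parts of Proposition~\ref{prop-conv} from the per-cycle descent-like bound in Proposition~\ref{prop-ripg-bound}, using the classical contradiction argument for incremental subgradient methods. The basic strategy is to apply \eqref{e-prop-ripg-bound} along the subsequence of cycle-start iterates $\{x_{jm}\}_{j\geq 0}$, treat $\|x_{jm}-y\|_2^2$ as a nonnegative quantity that must never become negative, and extract a contradiction via a telescoping sum if the claimed $\liminf$ inequality fails.

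\textbf{Constant step size.} Fix $t_k\equiv t$. Suppose, toward contradiction, that $\liminf_{j\to\infty} f(x_{jm}) > f^\star + \tfrac{1}{2}\rho t\beta m^2 c^2$. Then there exist $\varepsilon>0$ and $J$ such that $f(x_{jm}) \geq f^\star + \tfrac{1}{2}\rho t\beta m^2 c^2 + \varepsilon$ for all $j\geq J$. Choose $y\in\mathcal C$ with $f(y)\leq f^\star + \varepsilon/2$ (possible since $f^\star = \inf_{\mathcal C} f$). Substituting into \eqref{e-prop-ripg-bound} with $k=jm$ and cancelling the $\beta\rho^2 t^2 m^2 c^2$ term yields
\[
\|x_{(j+1)m}-y\|_2^2 \;\leq\; \|x_{jm}-y\|_2^2 - \rho t\varepsilon \qquad (j\geq J).
\]
Summing gives $\|x_{(J+N)m}-y\|_2^2 \leq \|x_{Jm}-y\|_2^2 - N\rho t\varepsilon$, which is negative for $N$ large enough. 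Hence the claimed $\liminf$ bound holds.

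\textbf{Diminishing step size.} Assume $\sum_k t_k=\infty$ and $t_k\to 0$. Suppose, toward contradiction, $\liminf_{j\to\infty} f(x_{jm}) > f^\star$. Pick $\varepsilon>0$, an index $J$, and $y\in\mathcal C$ with $f(y)\leq f^\star + \varepsilon/2$ such that $f(x_{jm}) - f(y) \geq \varepsilon/2$ for all $j\geq J$. Because $t_{jm}\to 0$, for $j$ sufficiently large we have $\beta\rho^2 t_{jm}^2 m^2 c^2 \leq \tfrac{1}{2}\rho t_{jm}\cdot(\varepsilon/2)$, and \eqref{e-prop-ripg-bound} becomes
\[
\|x_{(j+1)m}-y\|_2^2 \;\leq\; \|x_{jm}-y\|_2^2 - \tfrac{1}{2}\rho t_{jm}\varepsilon.
\]
Telescoping and invoking $\sum_j t_{jm}=\infty$ (which follows from $\sum_k t_k=\infty$) drives the right-hand side to $-\infty$, a contradiction, so $\liminf_{j\to\infty} f(x_{jm}) = f^\star$.

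\textbf{Main obstacle.} The step I expect to require the most care is upgrading the conclusion from the cycle-start subsequence $\{x_{jm}\}$ to the full sequence $\{x_k\}$. Within a single cycle, each of the $m$ updates is a proximal or subgradient step with (sub)gradients bounded by $c$ (Assumption~\ref{assumption-ripg1} or~\ref{assumption-ripg2}), so $\|x_{jm+\ell} - x_{jm}\|_2 = O(t_{jm})$ for $0\leq \ell < m$, and the Lipschitz-type bounds in the same Assumptions then give $|f_j(x_{jm+\ell}) - f_j(x_{jm})| = O(t_{jm})$ and hence $|f(x_{jm+\ell}) - f(x_{jm})| = O(t_{jm})$. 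In the constant-step case these $O(t)$ corrections can be absorbed into the constant $\beta$ (they only worsen the bound by a lower-order amount already present in the derivation of $\beta$); in the diminishing-step case they vanish in the limit. Consequently the $\liminf$ bound for $\{x_{jm}\}$ transfers to $\{x_k\}$, completing the proof of both assertions.
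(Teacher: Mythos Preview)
Your argument is correct and is precisely the classical contradiction/telescoping argument of Bertsekas that the paper invokes by citation: the paper's proof simply observes that \eqref{e-prop-ripg-bound} has the same form as Proposition~3 of \cite{Ber:11} after the substitution $\tilde t_k=\rho t_k$ and then appeals to Propositions~4 and~6 there, which is exactly what you have written out in full. One simplification: your ``Main obstacle'' is not actually an obstacle, because $\liminf_{k\to\infty} f(x_k)\leq \liminf_{j\to\infty} f(x_{jm})$ holds automatically for any subsequence, and the lower bound $\liminf_k f(x_k)\geq f^\star$ follows directly from $x_k\in\mathcal C$ --- so no within-cycle continuity estimates are needed to pass from $\{x_{jm}\}$ to $\{x_k\}$.
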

\begin{proof}
The bound \eqref{e-prop-ripg-bound} has the exact same form as the
bound in Proposition~3 in \cite{Ber:11} if we define a scaled
parameter $\tilde t_k = \rho t_k$. Since the convergence analysis in
\cite{Ber:11} is based on this bound, it also holds for the relaxed
variants of the methods; see Propositions 4 and 6 in \cite{Ber:11}.
  \end{proof}
\begin{remark}
  It is also possible to obtain similar bounds (in expectation) for
  R-IPG1 and R-IPG2 with randomized control. However, since the
  analysis is nearly identical to that in \cite{Ber:11}, we omit the
  details for the sake of brevity. Further details and efficiency
  estimates can be found in \cite{Ber:11}.
\end{remark}

The error bound for the above methods is $O(mc/\sqrt{\ell})$ where $\ell$ is
the number of cycles. We remind the reader that despite this poor
global error bound, the incremental methods often have fast initial
rate of convergence and may outperform nonincremental methods when
low-accuracy is acceptable. 

\section{ART Within the R-IPG Framework}
\label{sec:art}

This section shows an important application of the algorithmic
framework introduced in the previous section.
In particular, we demonstrate how specialized variants of the relaxed algorithms
lead to the well-known ART method and variants of this method.

\subsection{Relaxed ART Methods}
ART can be viewed both as an incremental gradient method and as an
incremental proximal method.  Specifically, if we let $g_i(x) = 0$ and
$h_i = \nicefrac{1}{2}(a_i^Tx - b_i)^2/ \|a_i\|_2^2$,
both R-IPG1 and R-IPG2 result in the incremental gradient iteration
\begin{align}\label{e-ripg-ig1}
  x_{k+1} = \PC \!\left( x_k - \rho t_k \, a_{i_k}
  (a_{i_k}^Tx_k - b_{i_k}) / \|a_{i_k}\|_2^2  \right) ,
\end{align}
which is equivalent to ART with relaxation parameter $\rho$ if we let $t_k = 1$.
Similarly, if we let $g_i(x) = I_{\mathcal{H}_i}(x)$ and $h_i = 0$ where
$
    \mathcal H_i = \{ x \in \reals^n \,|\, a_i^Tx - b_i = 0\} ,
$
then both R-IPG1 and R-IPG2 result in the following algorithm
\begin{align} \label{e-ripg-ip1}
  x_{k+1} = \PC \bigl(\rho  \proj_{\mathcal H_{i_k}}(x_k) + (1-\rho) x_k \bigr),
\end{align}
where the projection of $x_k$ on $\mathcal H_{i_k}$ is given by
$\proj_{\mathcal H_{i_k}}(x_k) = x_k - a_{i_k} (a_{i_k}^Tx_k-b_{i_k}) /
\|a_{i_k}\|_2^2$.
When we insert this relation into \eqref{e-ripg-ip1}, we once again obtain ART
with relaxation parameter~$\rho$. 

Note that although the iteration \eqref{e-ripg-ip1} is an incremental
proximal algorithm, the choice $g_i(x) = I_{\mathcal{H}_i}(x)$ does
not satisfy Assumptions \ref{assumption-ripg1} and
\ref{assumption-ripg2}.  In fact, the corresponding problem is a
convex feasibility problem that may or may not be feasible.  ART, 
however, is known to converge to the minimum norm solution if the system
$Ax=b$ is consistent (\ie, the feasibility problem is feasible), and
otherwise ART converges to a weighted least-squares solution provided
that a diminishing step-size sequence is used
\cite{CEG:83,JiW:03}. Similarly, using Proposition~\ref{prop-conv}, it
follows that the iteration \eqref{e-ripg-ig1} converges to a weighted
least-squares solution when a diminishing step-size sequence is
used. 

An alternative to the choice $g_i(x) = I_{\mathcal{H}_i}(x)$ is to
define $g_i(x) = \dist(x, \mathcal H_i)$ or $g_i(x) = \dist(x,
\mathcal H_i)^2$, and as we will see in the next section, this gives
rise to \emph{damped} ART-like algorithms.

\subsection{Damped ART}
\label{subsection:DART}

It is an interesting and useful fact that
there are many other possible choices of $g_{i}$ and $h_{i}$ that lead to
convergent incremental methods that are similar to ART\@.
For example, if we let $g_i(x) = \nicefrac{1}{2}(a_i^Tx - b_i)^2$ and $h_i(x) = 0$,
we obtain the following incremental proximal method
  \begin{align}
  \label{e-ripg-ip3}
    x_{k+1} = \PC \left( x_k - \rho
    \frac{a_{i_k}(a_{i_k}^Tx_k - b_{i_k})}{ \|a_{i_k}\|_2^2 + t_k^{-1}} \right).
  \end{align}
This iteration can be viewed as a \emph{damped} ART method where $t_k$
determines the damping at step $k$. Large values of $t_k$ correspond to a
small amount of damping, and it is easy to verify that in the limit,
if we let $t_k \rightarrow \infty$, the iteration is equivalent to ART\@.
This variant of ART is useful when some rows have very small but nonzero norm,
in which case the damping helps to suppress noise amplification---we illustrate
this with an example in \S\ref{subsec:advantage}.

It is also possible to derive generalized block methods based on the
relaxed incremental proximal gradient methods. Here we consider block
iterative methods for minimizing $\| Ax -b\|_2^2$.
Suppose we partition $A$ and $b$ into $p$ blocks of rows where $A_i \in
\reals^{m_i \times n}$ denotes the $i$th block of $A$ and $b_i \in
\reals^{m_i}$ denotes the $i$th block of $b$.
A variant of Elfving's block-Kaczmarz method \cite{Elf:80} then follows from iteration
\eqref{e-ripg-ip1} if we let $\mathcal B_i = \{ x\in \reals^n \,|\,
A_i x = b_i \}$. Since the projection of a point $x$ onto $\mathcal
B_i$ can be expressed as $\proj_{\mathcal B_i}(x) = x - A_i^\dagger (A_ix -
b_i)$, we can express the block Kaczmarz method as
\begin{align} \label{e-block-kaczmarz}
  x_{k+1} = x_k - \rho A_{i_k}^\dagger (A_{i_k}x - b_{i_k}).
\end{align}
Notice that like Kaczmarz's method, the block Kaczmarz method does not
include the parameter $t_k$.
If we instead let $g_i(x) = \nicefrac{1}{2} \|A_ix - b_i\|_2^2$, we obtain the following proximal operator
\begin{align} \label{e-block-art-prox}
  \prox_{t_k g_{i_k}}(x) &= ( I + t_k A_{i_k}^TA_{i_k})^{-1} (x+ t_k A_{i_k}^Tb_{i_k}) \\
  &= x - M_{i_k} (A_{i_k} x - b_{i_k}) ,
\end{align}
where $M_{i_k} = A_{i_k}^T(A_{i_k}A_{i_k}^T + t_k^{-1}I)^{-1}$.
From the limit definition
\cite{Alb:72}
\begin{align*}
  A^\dagger = \lim_{\delta \rightarrow 0} (A^TA + \delta^2 I)^{-1}A =
  \lim_{\delta \rightarrow 0} A^T(AA^T + \delta^2 I)^{-1},
\end{align*}
we immediately see that $M_{i_k} \rightarrow A_{i_k}^\dagger$ as
$t_k \rightarrow \infty$. We can therefore interpret
$M_{i_k}$ as a regularized or \emph{damped} pseudoinverse of $A_{i_k}$, and
hence the resulting incremental method is a block variant of
\eqref{e-ripg-ip3}.

We obtain yet another damped ART-like algorithm if we let
$h_i(x) = 0$ and either $g_i(x) = \dist(x, \mathcal H_i)$ or $g_i(x) =
\nicefrac{1}{2}\,\dist(x, \mathcal H_i)^2$. For example, if we let
$g_i(x) = \nicefrac{1}{2}\,\dist(x, \mathcal H_i)^2$, the proximal
operator associated with $g_{i_k}$ is given by
\begin{align*}
  \prox_{t_k g_{i_k}}(u) = (1-\theta_k) u + \theta_k \proj_{\mathcal H_{i_k}}(u) ,
\end{align*}
where $\theta_k = \min\bigl(1, t_k\|a_{i_k}\|_2/|a_{i_k}^Tu - b_{i_k}|\bigr)$; see, \eg,
\cite{CoP:11}. The resulting relaxed incremental
proximal iteration is of the form
\begin{align} \label{e-ripg-ip2}
  x_{k+1} =
  \begin{cases}
    \PC \left( x_k - \rho \displaystyle\frac{a_{i_k}(a_{i_k}^Tx_k-b_{i_k})}{\|a_{i_k}\|_2^2} \right)& |a_{i_k}^Tx_k-b_{i_k}| < t_k \|a_{i_k}\|_2 \\[1em]
    \PC\left( x_k - \rho t_k \displaystyle
    \frac{a_{i_k}}{\|a_{i_k}\|_2} \sgn(a_{i_k}^Tx_k - b_{i_k}) \right) & \text{otherwise}
  \end{cases}
\end{align}
if $\|a_{i_k}\|_2 > 0$, and otherwise $x_{k+1} = x_k$.

\subsection{Damped ART for Robust Regression}

It is well-known that the least squares objective $\|Ax-b\|_2^2$ yields a
maximum a posteriori (MAP) estimate when the noise is Gaussian.
However, $\ell_2$ data fitting is sensitive to outliers.
A more robust criterion is the $\ell_1$ norm objective
$\|Ax-b\|_1$ (which is also known as linear \emph{least absolute
  value} regression), and this yields a MAP estimate
when the noise follows a Laplace distribution \cite{Die:05}.
Minimizing the $\ell_1$ norm of the residuals instead of the squared
$\ell_2$ norm leads to another damped ART-like algorithm. Specifically, if we
let $g_i(x) = |a_i^Tx-b_i|$ and $h_i(x) =0$, then both R-IPG1 and R-IPG2
lead to the following update
\begin{align}
  \label{e-ripg-ip4}
  x_{k+1} =
  \begin{cases}
    \PC \left( x_k - \rho \displaystyle\frac{a_{i_k}(a_{i_k}^Tx_k-b_{i_k})}{\|a_{i_k}\|_2^2} \right)& |a_{i_k}^Tx_k-b_{i_k}| < t_k \|a_{i_k}\|_2^2 \\[1em]
        \PC \bigl( x_k - \rho t_k a_{i_k}\sgn(a_{i_k}^Tx_k-b_{i_k})\bigr) & \text{otherwise}.
  \end{cases}
\end{align}
To see this, consider the proximal operator
\begin{align*}
  \prox_{t_k g_{i_k}}(x_k)  =\argmin_{u \in \reals^n} \left\{ t_k\, | a_{i_k}^Tu - b_{i_k} | + \nicefrac{1}{2} \|u -x_k \|_2^2 \right\}.
\end{align*}
Clearly, $\prox_{t_k g_{i_k}}(x_k) = x_k$ if either $a_{i_k} = 0$ or
$a_{i_k}^Tx_k = b_{i_k}$, and otherwise the minimizer $u^\star$ is
attained on the line segment between $x_k$ and its projection
on the affine subspace $\{x\,|\, a_{i_k}^Tx = b_{i_k} \}$, \ie,
\begin{align}
  \label{e-ripg-4-prox-sol}
 u^\star = x_k - \theta^\star a_{i_k}(a_{i_k}^Tx_k - b_{i_k}) / \|a_{i_k}\|_2^2
\end{align}
for some $\theta^\star \in (0,1]$. Thus, using this parameterization of
$u^\star$, we may evaluate the proximal operator by computing
\begin{align*}
 \theta^\star &= \argmin_{\theta \in (0,1]} \left\{ t_k (1-\theta)|
  a_{i_k}^Tx_k - b_{i_k}| + \nicefrac{\theta^2}{2} (a_{i_k}^Tx_k -
  b_{i_k})^2/\|a_{i_k}\|_2^2 \right\} \\
  &= \min\left(1, t_k \|a_{i_k}\|_2^2 /|a_{i_k}^Tx_k - b_{i_k}|\right)
\end{align*}
and the minimizer $u^\star = \prox_{t_k g_{i_k}}(x_k)$ then follows from
\eqref{e-ripg-4-prox-sol}. Using this result in R-IPG1 or R-IPG2, we
obtain the iteration \eqref{e-ripg-ip4}.  This is very
similar to the iteration \eqref{e-ripg-ip2}, but the step-size rule is
based on $\|a_{i_k}\|_2$ instead of $\|a_{i_k}\|_2^2$.

The update \eqref{e-ripg-ip4} is simply a (relaxed) projection if the
magnitude of the residual $|a_{i_k}^Tx_k - b_{i_k}|$ is sufficiently
small. On the other hand, if $|a_{i_k}^Tx_k - b_{i_k}|$ is
sufficiently large, the step will be damped, and the parameter $t_k$ governs the
damping. Notice that like the method \eqref{e-ripg-ip3}, the
method \eqref{e-ripg-ip4} also reduces to ART if $t_k$ is sufficiently
large.

The Huber penalty $\phi_\mu(t)$, which is defined as
  \[
    \phi_\mu(t) =
    \begin{cases}
      \nicefrac{t^2}{(2\mu)} & |t| < \mu\\
      |t| - \nicefrac{\mu}{2} & |t|\geq \mu
    \end{cases}
  \]
where $\mu \geq 0$ is a parameter, can be viewed as a combination of the
$\ell_1$ and $\ell_2$ norms.
If we define $g_i(x) = \phi_\mu(a_i^Tx - b_i)$ and $h_i(x) = 0$ in R-IPG1
or R-IPG2, we obtain the following iteration
\begin{align}
  x_{k+1} =
  \begin{cases}
   \PC\left( x_k - \rho \displaystyle\frac{a_{i_k} (a_{i_k}^Tx_k -
     b_{i_k})}{\nicefrac{\mu}{t_k} + \|a_{i_k} \|_2^2} \right) &
   |a_{i_k}^Tx_k -b_{i_k}| < \mu + t_k \|a_{i_k}\|_2^2 \\[1em]
   \PC\bigl( x_k - \rho t_k a_{i_k} \sgn(a_{i_k}^Tx_k - b_{i_k}) \bigr)&  \text{otherwise.}
  \end{cases}
\end{align}
Note that this reduces to the algorithm \eqref{e-ripg-ip4} for
$\ell_1$ norm minimization when $\mu = 0$.

\section{Generalized Row-Action Methods with Regularization}
\label{sec:artreg}

We now turn to constrained \textit{regularized} least-squares problems of the form
\begin{align} \label{e-reg-ls}
\begin{array}{ll}
  \mbox{minimize}
  &  f(x) \equiv \nicefrac{1}{2} \sum_{i=1}^m (a_i^Tx - b_i)^2 +  \lambda \psi (x)\\[1mm]
  \mbox{subject to}
  & x \in \mathcal C ,
\end{array}
\end{align}
where $\lambda > 0$ is a regularization parameter.
We will assume that the regularization function $\psi (x)$ is convex
(but not necessarily smooth).
Notice that this problem is of the
form \eqref{e-opt-sum-of-functions} if, for example, we let $g_i(x) =
\nicefrac{1}{2} (a_i^Tx - b_i)^2$ and $h_i(x) = \nicefrac{\lambda}{m} \psi (x)$.
There are obviously many other ways to express \eqref{e-reg-ls} as a
problem of the form \eqref{e-opt-sum-of-functions}, and these give
rise to a family of relaxed incremental proximal gradient algorithms
for the problem \eqref{e-reg-ls}. Note that although the resulting
algorithms may appear to be somewhat similar, they may behave
very differently in practice.

A straightforward way to construct an incremental method for the
regularized least-squares problem \eqref{e-reg-ls} is to define $m$ components
\begin{align}
  \label{e-reg-ls-components-1}
  g_i(x) = \nicefrac{1}{2} (a_i^Tx - b_i)^2, \quad h_i(x) =
  \nicefrac{\lambda}{m} \psi(x), \quad i=1,\ldots,m.
\end{align}
This choice results in algorithms that alternate
between a small (sub)gradient step for the regularization term, a
damped projection on a hyperplane defined by one of the equations
$a_i^Tx - b_i = 0$, and a projection on $\mathcal C$.

We obtain a different pair of algorithms if we instead define $m+s$ components
\begin{subequations}\label{e-reg-ls-components-2}
  \begin{align}
  \label{e-reg-ls-components-2-a}
   g_i(x) &= \nicefrac{1}{2} (a_i^Tx-b_i)^2, & h_i(x) &= 0, \quad i=1,\ldots,m,\\
  \label{e-reg-ls-components-2-b}
  g_{m+i}(x) &= \nicefrac{\lambda}{s} \psi(x), &  h_{m+i}(x) &= 0, \quad i=1,\ldots,s,
\end{align}
or alternatively, instead of \eqref{e-reg-ls-components-2-b},
\begin{align}
  \label{e-reg-ls-components-2-c}
  g_{m+i}(x) &=0, & h_{m+i}(x)& = \nicefrac{\lambda}{s} \psi(x), \quad i=1,\ldots,s.
\end{align}
\end{subequations}
With cyclic component selection, this results in ART-like algorithms
where each cycle consists of $m$ damped projections, followed by
either $s$ proximal steps associated \eqref{e-reg-ls-components-2-b} or $s$
(sub)gradient steps associated with \eqref{e-reg-ls-components-2-c}.

These algorithms are similar to the ``superiorization method'' of
Censor et al.\ \cite{CDH:10} which is also an ART-like method where
each cycle consists of a complete ART-cycle, followed by a
``correction step'' which is referred to as a perturbation. The
correction step can be a (sub)gradient step, and if the set $\mathcal
D = \{ x \,|\, x\in \mathcal C, \, Ax = b\}$ is nonempty, then the
superiorization method converges to a point in $\mathcal D$ provided
that the norm of the correction at iteration $k$ goes to zero as
$k\rightarrow\infty$.
If the correction is a negative subgradient of a
regularization function $\psi(x)$, the method tends to converge to
points in $\mathcal D$ for which the regularization term $\psi(x)$ is
small compared to what can be achieved with plain ART\@.  In contrast,
our approach leads to methods that always converge to a solution of the
regularized least-squares problem \eqref{e-reg-ls}, provided that a
diminishing step-size sequence is used.

\subsection{Least-Squares with Total Variation Regularization}

Total Variation (TV) regularization \cite{ROF:92} is popular in imaging
because of its ability to suppress noise while preserving edges. In
the discrete setting, the TV seminorm of a vector representation $x
\in \reals^n$ of a $d$ dimensional image $X$ can be expressed as a
mixed $\ell_{1,2}$ norm
\begin{align}
  \label{e-tv-seminorm} \textstyle
    \psi(x) = \| Dx \|_{1,2} \equiv \sum_{i=1}^n \|D_i x\|_2 ,
\end{align}
 where $D_i$ is a $d
 \times n$ matrix such that $D_ix$ is a finite-difference
 approximation of the gradient at the $i$th pixel ($d=2$) or voxel
 ($d=3$), and $D_i$ is the $i$th block-row of $D \in \reals^{dn \times
   n}$. Note that the
 definition of $D_i$ depends on both boundary conditions and the
 finite-difference approximation of the gradient.

 The TV seminorm \eqref{e-tv-seminorm} is not everywhere differentiable, but it
 is easy to compute a subgradient using the chain rule and the following property
\begin{align}
  \partial \| x \|_2 =
  \begin{cases}
     \{ x / \| x\|_2 \} &  \|x\|_2 > 0 \\
     \{ y\,|\, \|y\|_2 \leq 1 \} &  \text{otherwise}.
  \end{cases}
\end{align}
Numerically we may compute a subgradient as
  \[
    \tnabla \psi (x) \approx D^T\diag(w_1I,\ldots, w_n I)^{-1} Dx ,
  \]
where $w_i = \max\{\tau, \|D_i x\|_2\}$ or $w_i = \|D_ix\|_2 + \tau
$ for some small $\tau > 0$ to avoid dividing by zero or a number
close to zero. Note that with the choice $w_i =
\max\{\tau, \|D_i x\|_2\}$, the above subgradient approximation can
be interpreted as the gradient of a smoothed version of the TV seminorm
  \[ \textstyle
    \psi_{\tau} (x) = \sum_{i=1}^m \phi(\| D_i x\|_2) ,
  \]
where $\phi_{\tau} : \reals \to \reals$ is the scaled Huber penalty
\[\phi_{\tau}(u) =
\begin{cases}
  (u)^2/(2\tau) & |u| \leq \tau \\
  |u| - \tau/2 & \text{otherwise}
\end{cases}
\]
which is once differentiable.

If we define $g_i$ and $h_i$ as in \eqref{e-reg-ls-components-2-a} and
\eqref{e-reg-ls-components-2-c}, the resulting incremental methods are
similar to the ASD-POCS method of Sidky et al.\ \cite{SiP:08}.
This method seeks a solution to the constrained TV-minimization problem
\begin{align}\label{e-reg-ls-constrained}
  \begin{array}{ll}
    \mbox{minimize}
    & \|Dx \|_{1,2} \\[1mm]
    \mbox{subject to}
    & \| Ax - b\|_2 \leq \gamma \\
    & x \in  \mathcal C ,
  \end{array}
\end{align}
where $\gamma$ is a constant. The problem \eqref{e-reg-ls-constrained}
is equivalent to the problem \eqref{e-reg-ls} (with $\psi(x) =
\|Dx\|_{1,2}$) in the sense that for each $\lambda > 0$, there exists
a constant $\gamma > 0$ such that both problems have the same set of
minimizers. Each iteration of the ASD-POCS method consists of a
complete ART cycle and a projection on $\mathcal C$, followed by $s$
subgradient steps based on subgradients of the TV seminorm. The method
is adaptive in the sense that it adjusts both the step size used in the
ART cycle and the step size used for the subgradient steps at each
iteration. As a consequence, ASD-POCS does not necessarily converge to
a solution to the problem \eqref{e-reg-ls-constrained}, but in
practice it often produces a feasible $x$ with low TV quite fast.

As mentioned in the beginning of this section, we obtain another pair
of incremental methods if we define the functions $g_i(x)$ and
$h_i(x)$ as in \eqref{e-reg-ls-components-2-a} and
\eqref{e-reg-ls-components-2-b}. The resulting methods do not require
a subgradient of the TV seminorm, but instead we need to evaluate the
proximal operators $\prox_{t_k g_{m+i}}(x)$, $i=1,\ldots,s$. If we
choose $s = 1$ and $g_{m+1} = \lambda \|Dx\|_{1,2}$, this amounts to
solving an unconstrained TV denoising problem, \ie,
\[ \prox_{t_k g_{m+1}}(x) = \argmin_{u\in \reals^n} \left\{ t_k \lambda
  \|Du\|_{1,2} + \nicefrac{1}{2} \|u - x \|_2^2 \right\}. \]
This is a strongly convex optimization problem that can be solved
efficiently with \eg\ FISTA \cite{BeT:09} or NESTA \cite{BBC:11}.  The
resulting relaxed incremental proximal algorithms resemble ART in that
every cycle involves damped projections onto the $m$ hyperplanes
defined by the rows of $A$, and in addition, every cycle also includes
a denoising step.

\subsection{Scaled Least-Squares with Regularization}

As a second example,
we consider the scaled regularized least-squares problem
\begin{align}\label{e-reg-ls-scaled}
\begin{array}{ll}
  \mbox{minimize}
  & \nicefrac{1}{2} \| M(A T u - b)\|_2^2 + \lambda \psi(Tu) \\[1mm]
  \mbox{subject to}
  & Tu \in \mathcal C
\end{array}
\end{align}
with variable $u \in \reals^n$ and where
$M \in \reals^{m \times m}$ and $T \in \reals^{n\times n}$ are nonsingular.
This problem is directly related to the SIRT algebraic iterative methods;
see \cite{EHN:12} for different choices of $M$ and $T$.
The problem \eqref{e-reg-ls} is equivalent to
\eqref{e-reg-ls-scaled} when $M=I$, and this follows by making
a change of variables $x = Tu$ in \eqref{e-reg-ls-scaled}.Thus, we
may view $T$ as a preconditioner.
In practice the matrix $T$ should be chosen such that  projections on the set
$\mathcal{C}_T = \{ u\,|\, Tu \in \mathcal C\}$ are easy to compute, \ie,
  \[
    \proj_{\mathcal{C}_T}(\hat u) = \argmin_{Tu \in \mathcal C} \| u -
    \hat u \|_2^2 = T^{-1} \argmin_{v \in \mathcal C} \| T^{-1}(v - T \hat u)\|_2^2
  \]
should be cheap to evaluate. This is the case if, for example,  $\mathcal C$ is a ``box'' of the
form $\{ x\,|\, c \leq x \leq d\}$ with $c,d \in \reals^n$ and $T$ is
diagonal and positive; in this case we have
$\proj_{\mathcal{C}_T}(\hat u) = T^{-1} \proj_{\mathcal C}(T\hat u)$.

The problem \eqref{e-reg-ls-scaled} can be used to derive
``preconditioned'' variants of R-IPG1 and R-IPG2 for the problem
\eqref{e-reg-ls}. To demonstrate this, we let $M = I$ and define
$g_i(u) = \nicefrac{1}{2} (a_i^T T u - b_i)^2$ and $h_i(u) =
\nicefrac{\lambda}{m} \psi(Tu)$ for $i=1,\ldots, m$. The
R-IPG1 updates for the scaled problem \eqref{e-reg-ls-scaled} can then
be expressed as
\begin{align*}
  w_k &= u_k - \frac{T^Ta_{i_k}(a_{i_k}^TTu_k - b_{i_k})}{ \|T
  a_{i_k}\|_2^2 + t_k^{-1}} \\
  z_k &= w_k - \frac{t_k \lambda}{m} T^T \tnabla \psi(Tw_k) \\
  u_{k+1} &= \proj_{\mathcal{C}_T}( \rho z_k + (1-\rho)  u_k) ,
\end{align*}
and after a change of variables ($x_k = Tu_k$, $\tilde w_k = T w_k$
and $\tilde z_k = T z_k$), we obtain the following ``preconditioned'' R-IPG1 method for the problem \eqref{e-reg-ls}
\begin{subequations}
  \begin{align}
  \tilde w_k &= x_k - \frac{T T^T a_{i_k}  (a_{i_k}^Tx_k - b_{i_k}) }{\| Ta_{i_k} \|_2^2+t_k^{-1}  }\\
  \tilde z_k &= \tilde w_k - \frac{t_k \lambda}{m} TT^T \tnabla \psi(\tilde w_k) \\
  \label{e-scaled-ipg-x}
  x_{k+1} &=  \proj_{\mathcal{C}_T} (\rho T^{-1}\tilde z_k +(1-\rho) T^{-1}x_k).
\end{align}
\end{subequations}
Although this method converges to a minimizer of \eqref{e-reg-ls} for
any nonsingular $T$ (provided that a diminishing step-size
sequence is used), the initial rate of convergence may vary in
practice. Moreover, if $T$ is diagonal, the projection
\eqref{e-scaled-ipg-x} can be expressed as
\begin{align}
  x_{k+1} = T^{-1} \PC(\rho z_k + (1-\rho) x_k).   
\end{align}
 A reasonable preconditioning strategy may be to define $T$
such that all the columns of $AT$ have unit $\ell_p$-norm.

\section{Numerical Results}
\label{sec-numres}

The numerical experiments described in this section were carried out
in \Matlab, and we used the package AIR Tools \cite{HaS:12} to generate the
$N\times N$ Shepp-Logan phantom represented by~$x^{\mathrm{exact}}$ and the
sparse matrix $A$.
The underlying model is a parallel-beam tomography problem with $p$ projections,
each involving $r$ rays; hence $A$ is $pr \times N^2$.

To avoid what is sometimes referred to as an ``inverse crime'' \cite{MuS:12},
we generate the measurement vector $b$ as follows.
First, using a larger number of rays $\bar{r}$ and a finer grid with
$\overline{N} \times \overline{N}$ pixels, we generate a noise-free sinogram
$\overline{B} \in \reals^{\bar{r} \times p}$;
in the experiments we use $\overline{N} = \mathrm{round}(\sqrt{3} N)$ and
$\bar{r} = \mathrm{round}(\sqrt{2}r)$.
We then use interpolation to compute a noise-free sinogram
$B \in \reals^{r \times p}$ that consists of $r$ samples for each projection
angle, and finally we obtain the noisy measurement data $b$ as
  \[
    b = b^{\mathrm{exact}} +  e , \qquad b^{\mathrm{exact}} = \mathrm{vec}(B) .
  \]
Here $e$ is a normally distributed noise vector with elements
$e_i \sim \mathcal{N}(0,\sigma^2)$ and
$\sigma$ is chosen such that $\| e \|_2 / \| b^{\mathrm{exact}} \|_2 = \eta$,
where we specify the noise level~$\eta$.

\subsection{The Advantage of Damped ART}
\label{subsec:advantage}

\begin{figure}
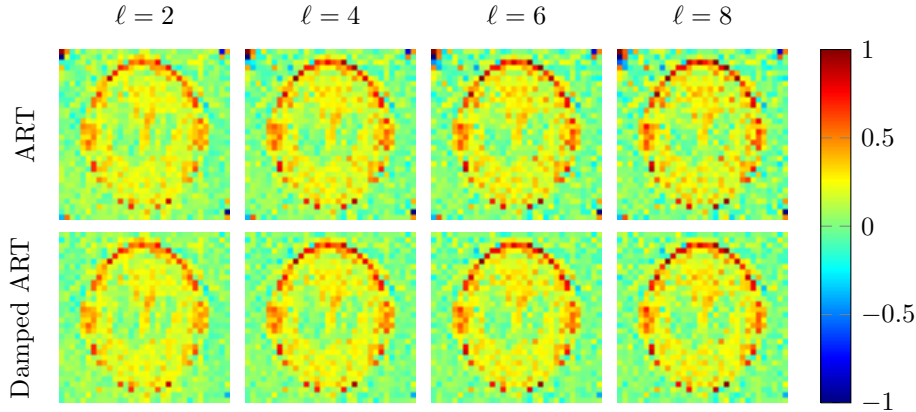

  \centering
  \input dart.tex
  \caption{\label{fig:DART} Four iterations of standard and damped ART methods for
  an example with noisy data; the damping $t_k^{-1} = 0.1\, \max_i \| a_i \|_2^2$
  suppresses the noise in the corners of the image
  without affecting the central part.}
\end{figure}

Our first example illustrates the use of the damped ART method
\eqref{e-ripg-ip3} from \S\ref{subsection:DART}.  When the noise level
$\eta$ is high, the reconstructed image tends to have large errors in
the corners pixels. These pixels correspond to rows of $A$ that have
small norm~$\| a_i \|_2$, and once such an error has occurred it stays
during the following iterations.

A simple and adaptive way to suppress these errors is to use damped
ART where we set the parameter $t_k$ in \eqref{e-ripg-ip3} to a
constant value chosen such that only updates for rows with small norm
are affected.  Figure \ref{fig:DART} illustrates this for an example
for the unrelaxed method ($\rho=1$) with $\eta = 0.08$, $N=32$, $r=32$, and
$p=36$.  The top row shows cycles $\ell=2,4,6,8$ for the standard ART
method, and the bottom row shows the same iterates for damped ART with
the choice $t_k^{-1} = 0.1\, \max_i \| a_i \|_2^2$.  The damping
clearly suppresses the noise in the corners of the image without
affecting the central part, and the results are not sensitive to the
factor (here, chosen as 0.1).

\subsection{Damped ART with Relaxation}
In the next experiment, we investigate the role of the relaxation
parameter $\rho$ and the parameter
$t_k$ for the damped ART method \eqref{e-ripg-ip3}.
We use a larger test problem with $N = 256$, $r = 362$, $p=120$, and
$\eta = 0.02$.
With this geometry, the average row norm $\|a_i\|_2$ is of the order 10.

Each of the plots in Figure~\ref{fig:ex_art_prox} shows the norm of the relative
error for different values of $\rho$ and with a fixed $t_k$.
Observe that when $t_k$ is small, the best performance is achieved with
overrelaxation (\ie, $1< \rho <2$) whereas when $t_k$ is large,
underrelaxation ($0< \rho < 1$) yields the best result.
Note also that with a large $t_k$, the relaxation parameter has a strong influence
on best iterate in terms of the minimum error.
In particular, the unrelaxed method ($\rho =1$) 
is poor for both $t_k=0.1$ and $t_k = 1.0$, but for $t_k =0.001$ its performance
is similar to that of the overrelaxed methods.
Finally recall that the
damped ART method is equivalent to ART if we let $t_k \rightarrow
\infty$, and in this example, the damped ART method is practically
indistinguishable from ART for $t_k \geq 1.0$.

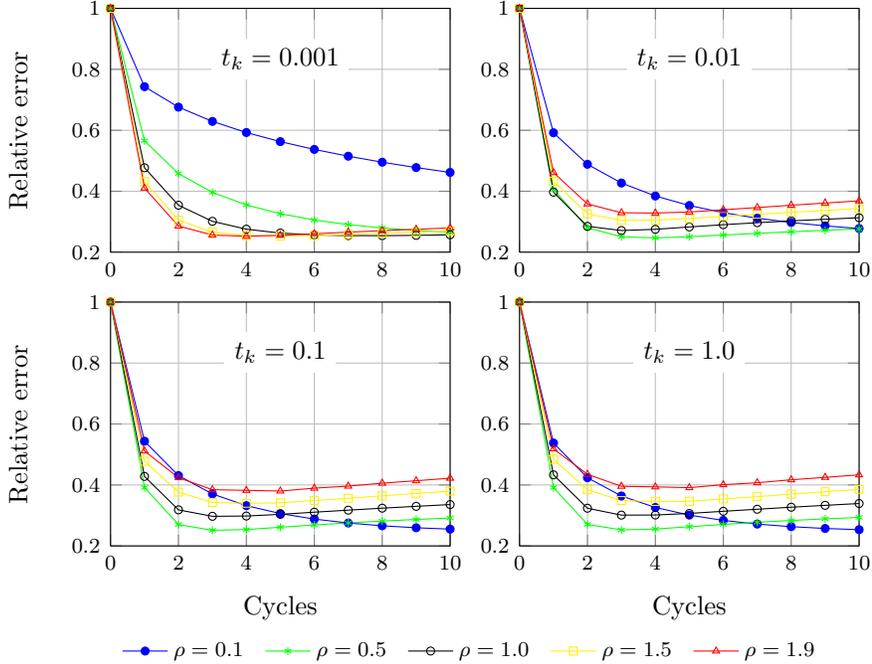
\begin{figure}
  \centering
  \begin{tikzpicture}
 
  \begin{axis}[
    name=plot1,
    font=\scriptsize,
    title={\small $t_k = 0.001$},
    title style={yshift=-3em,fill=white},
    ylabel={\small Relative error},
    ylabel style={overlay},
    xmin=0,
    xmax=10,
    ymin=0.2,
    ymax=1e0,
    width=2.4in,
    height=1.9in,
    grid=both
    ]

    \addplot [color=blue,mark=*,mark size=1.5pt] coordinates {
(0.0000e+00,1.0000e+00)(1.0000e+00,7.4305e-01)(2.0000e+00,6.7600e-01)(3.0000e+00,6.2909e-01)(4.0000e+00,5.9265e-01)(5.0000e+00,5.6272e-01)(6.0000e+00,5.3721e-01)(7.0000e+00,5.1497e-01)(8.0000e+00,4.9529e-01)(9.0000e+00,4.7769e-01)(1.0000e+01,4.6180e-01)
    };
    \addplot [color=green,mark=asterisk,mark size=1.5pt] coordinates {
(0.0000e+00,1.0000e+00)(1.0000e+00,5.6560e-01)(2.0000e+00,4.5818e-01)(3.0000e+00,3.9633e-01)(4.0000e+00,3.5518e-01)(5.0000e+00,3.2637e-01)(6.0000e+00,3.0567e-01)(7.0000e+00,2.9060e-01)(8.0000e+00,2.7958e-01)(9.0000e+00,2.7153e-01)(1.0000e+01,2.6570e-01)    
    };
    \addplot [color=black,mark=o,mark size=1.5pt] coordinates { (0.0000e+00,1.0000e+00)(1.0000e+00,4.7701e-01)(2.0000e+00,3.5412e-01)(3.0000e+00,3.0132e-01)(4.0000e+00,2.7553e-01)(5.0000e+00,2.6278e-01)(6.0000e+00,2.5676e-01)(7.0000e+00,2.5447e-01)(8.0000e+00,2.5429e-01)(9.0000e+00,2.5532e-01)(1.0000e+01,2.5704e-01)
    };
    \addplot [color=yellow,mark=square,mark size=1.5pt] coordinates {
(0.0000e+00,1.0000e+00)(1.0000e+00,4.3156e-01)(2.0000e+00,3.0539e-01)(3.0000e+00,2.6615e-01)(4.0000e+00,2.5443e-01)(5.0000e+00,2.5280e-01)(6.0000e+00,2.5481e-01)(7.0000e+00,2.5817e-01)(8.0000e+00,2.6196e-01)(9.0000e+00,2.6583e-01)(1.0000e+01,2.6958e-01)
    };
    \addplot [color=red,mark=triangle,mark size=1.5pt] coordinates { (0.0000e+00,1.0000e+00)(1.0000e+00,4.1037e-01)(2.0000e+00,2.8573e-01)(3.0000e+00,2.5634e-01)(4.0000e+00,2.5219e-01)(5.0000e+00,2.5549e-01)(6.0000e+00,2.6052e-01)(7.0000e+00,2.6575e-01)(8.0000e+00,2.7072e-01)(9.0000e+00,2.7532e-01)(1.0000e+01,2.7957e-01)
    };
  \end{axis}

  \begin{axis}[
    name=plot2,
    at=(plot1.right of south east), anchor=left of south west,
    font=\scriptsize, 
    title={\small $t_k = 0.01$},
    title style={yshift=-3em,fill=white},
    xmin=0,
    xmax=10,
    ymin=0.2,
    ymax=1e0,
    width=2.4in,
    height=1.9in,
    grid=both
    ]

    \addplot [color=blue,mark=*,mark size=1.5pt] coordinates {
(0.0000e+00,1.0000e+00)(1.0000e+00,5.9183e-01)(2.0000e+00,4.8842e-01)(3.0000e+00,4.2696e-01)(4.0000e+00,3.8443e-01)(5.0000e+00,3.5326e-01)(6.0000e+00,3.2970e-01)(7.0000e+00,3.1156e-01)(8.0000e+00,2.9745e-01)(9.0000e+00,2.8638e-01)(1.0000e+01,2.7768e-01)
    };
    \addplot [color=green,mark=asterisk,mark size=1.5pt] coordinates {
(0.0000e+00,1.0000e+00)(1.0000e+00,4.0604e-01)(2.0000e+00,2.8007e-01)(3.0000e+00,2.5074e-01)(4.0000e+00,2.4696e-01)(5.0000e+00,2.5078e-01)(6.0000e+00,2.5624e-01)(7.0000e+00,2.6182e-01)(8.0000e+00,2.6709e-01)(9.0000e+00,2.7194e-01)(1.0000e+01,2.7640e-01)
    };
    \addplot [color=black,mark=o,mark size=1.5pt] coordinates { 
(0.0000e+00,1.0000e+00)(1.0000e+00,3.9697e-01)(2.0000e+00,2.8487e-01)(3.0000e+00,2.7114e-01)(4.0000e+00,2.7513e-01)(5.0000e+00,2.8282e-01)(6.0000e+00,2.9023e-01)(7.0000e+00,2.9680e-01)(8.0000e+00,3.0272e-01)(9.0000e+00,3.0808e-01)(1.0000e+01,3.1308e-01)
    };
    \addplot [color=yellow,mark=square,mark size=1.5pt] coordinates {
(0.0000e+00,1.0000e+00)(1.0000e+00,4.3245e-01)(2.0000e+00,3.2602e-01)(3.0000e+00,3.0446e-01)(4.0000e+00,3.0536e-01)(5.0000e+00,3.1058e-01)(6.0000e+00,3.1760e-01)(7.0000e+00,3.2429e-01)(8.0000e+00,3.3093e-01)(9.0000e+00,3.3715e-01)(1.0000e+01,3.4327e-01)
    };
    \addplot [color=red,mark=triangle,mark size=1.5pt] coordinates { 
(0.0000e+00,1.0000e+00)(1.0000e+00,4.6129e-01)(2.0000e+00,3.5837e-01)(3.0000e+00,3.2951e-01)(4.0000e+00,3.2853e-01)(5.0000e+00,3.3183e-01)(6.0000e+00,3.3933e-01)(7.0000e+00,3.4645e-01)(8.0000e+00,3.5412e-01)(9.0000e+00,3.6128e-01)(1.0000e+01,3.6840e-01)
    };
  \end{axis}

 \begin{axis}[
    name=plot4,
    at=(plot2.below south west), anchor=above north west,
    font=\scriptsize, 
    title={\small $t_k = 1.0$},
    title style={yshift=-3em,fill=white},
    xlabel={\small Cycles}, 
    xmin=0,
    xmax=10,
    ymin=0.2,
    ymax=1e0,
    width=2.4in,
    height=1.9in,
    grid=both
    ]

    \addplot [color=blue,mark=*,mark size=1.5pt] coordinates {
(0.0000e+00,1.0000e+00)(1.0000e+00,5.3760e-01)(2.0000e+00,4.2375e-01)(3.0000e+00,3.6387e-01)(4.0000e+00,3.2626e-01)(5.0000e+00,3.0111e-01)(6.0000e+00,2.8378e-01)(7.0000e+00,2.7167e-01)(8.0000e+00,2.6318e-01)(9.0000e+00,2.5730e-01)(1.0000e+01,2.5332e-01)
    };
    \addplot [color=green,mark=asterisk,mark size=1.5pt] coordinates {
(0.0000e+00,1.0000e+00)(1.0000e+00,3.9142e-01)(2.0000e+00,2.7064e-01)(3.0000e+00,2.5260e-01)(4.0000e+00,2.5537e-01)(5.0000e+00,2.6309e-01)(6.0000e+00,2.7067e-01)(7.0000e+00,2.7746e-01)(8.0000e+00,2.8347e-01)(9.0000e+00,2.8882e-01)(1.0000e+01,2.9364e-01)
    };
    \addplot [color=black,mark=o,mark size=1.5pt] coordinates { 
(0.0000e+00,1.0000e+00)(1.0000e+00,4.3312e-01)(2.0000e+00,3.2366e-01)(3.0000e+00,3.0104e-01)(4.0000e+00,3.0142e-01)(5.0000e+00,3.0661e-01)(6.0000e+00,3.1364e-01)(7.0000e+00,3.2036e-01)(8.0000e+00,3.2699e-01)(9.0000e+00,3.3314e-01)(1.0000e+01,3.3915e-01)
    };
    \addplot [color=yellow,mark=square,mark size=1.5pt] coordinates {
(0.0000e+00,1.0000e+00)(1.0000e+00,4.8421e-01)(2.0000e+00,3.8434e-01)(3.0000e+00,3.4887e-01)(4.0000e+00,3.4586e-01)(5.0000e+00,3.4654e-01)(6.0000e+00,3.5437e-01)(7.0000e+00,3.6157e-01)(8.0000e+00,3.7002e-01)(9.0000e+00,3.7785e-01)(1.0000e+01,3.8556e-01)
    };
    \addplot [color=red,mark=triangle,mark size=1.5pt] coordinates { 
(0.0000e+00,1.0000e+00)(1.0000e+00,5.1845e-01)(2.0000e+00,4.3503e-01)(3.0000e+00,3.9566e-01)(4.0000e+00,3.9383e-01)(5.0000e+00,3.9116e-01)(6.0000e+00,4.0094e-01)(7.0000e+00,4.0725e-01)(8.0000e+00,4.1704e-01)(9.0000e+00,4.2496e-01)(1.0000e+01,4.3329e-01)
    };

  \end{axis}

 \begin{axis}[
    name=plot3,
    at=(plot4.left of south west), anchor=right of south east,
    font=\scriptsize, 
    title={\small $t_k = 0.1$},
    title style={yshift=-3em,fill=white},
    ylabel={\small Relative error},
    ylabel style={overlay},
    xlabel={\small Cycles}, 
    xmin=0,
    xmax=10,
    ymin=0.2,
    ymax=1e0,
    width=2.4in,
    height=1.9in,
    grid=both,
    legend style={legend columns=5,font=\scriptsize, draw=none},
    legend to name=ex_art_prox_legend
    ]

    \addplot [color=blue,mark=*,mark size=1.5pt] coordinates {
(0.0000e+00,1.0000e+00)(1.0000e+00,5.4355e-01)(2.0000e+00,4.3082e-01)(3.0000e+00,3.7053e-01)(4.0000e+00,3.3216e-01)(5.0000e+00,3.0619e-01)(6.0000e+00,2.8806e-01)(7.0000e+00,2.7522e-01)(8.0000e+00,2.6608e-01)(9.0000e+00,2.5962e-01)(1.0000e+01,2.5513e-01)
    };
    \addlegendentry[text width=0.45in]{$\rho=0.1$}

    \addplot [color=green,mark=asterisk,mark size=1.5pt] coordinates {
(0.0000e+00,1.0000e+00)(1.0000e+00,3.9155e-01)(2.0000e+00,2.7016e-01)(3.0000e+00,2.5135e-01)(4.0000e+00,2.5378e-01)(5.0000e+00,2.6131e-01)(6.0000e+00,2.6880e-01)(7.0000e+00,2.7553e-01)(8.0000e+00,2.8150e-01)(9.0000e+00,2.8681e-01)(1.0000e+01,2.9161e-01)
    };
    \addlegendentry[text width=0.45in]{$\rho=0.5$}

    \addplot [color=black,mark=o,mark size=1.5pt] coordinates { 
(0.0000e+00,1.0000e+00)(1.0000e+00,4.2802e-01)(2.0000e+00,3.1835e-01)(3.0000e+00,2.9718e-01)(4.0000e+00,2.9808e-01)(5.0000e+00,3.0364e-01)(6.0000e+00,3.1068e-01)(7.0000e+00,3.1736e-01)(8.0000e+00,3.2387e-01)(9.0000e+00,3.2990e-01)(1.0000e+01,3.3575e-01)
    };
    \addlegendentry[text width=0.45in]{$\rho=1.0$}

    \addplot [color=yellow,mark=square,mark size=1.5pt] coordinates {
(0.0000e+00,1.0000e+00)(1.0000e+00,4.7800e-01)(2.0000e+00,3.7661e-01)(3.0000e+00,3.4265e-01)(4.0000e+00,3.4002e-01)(5.0000e+00,3.4146e-01)(6.0000e+00,3.4912e-01)(7.0000e+00,3.5629e-01)(8.0000e+00,3.6451e-01)(9.0000e+00,3.7216e-01)(1.0000e+01,3.7971e-01)
    };
    \addlegendentry[text width=0.45in]{$\rho=1.5$}

    \addplot [color=red,mark=triangle,mark size=1.5pt] coordinates { 
(0.0000e+00,1.0000e+00)(1.0000e+00,5.1153e-01)(2.0000e+00,4.2393e-01)(3.0000e+00,3.8450e-01)(4.0000e+00,3.8225e-01)(5.0000e+00,3.8011e-01)(6.0000e+00,3.8955e-01)(7.0000e+00,3.9625e-01)(8.0000e+00,4.0582e-01)(9.0000e+00,4.1392e-01)(1.0000e+01,4.2217e-01)
    };
    \addlegendentry{$\rho=1.9$}

  \end{axis}
\end{tikzpicture}

\begin{tikzpicture}
 \ref{ex_art_prox_legend}    
\end{tikzpicture}
  \caption{The relative error $\|x_{\ell m} -
    x^{\mathrm{exact}}\|_2/\|x^{\mathrm{exact}}\|_2$ for damped ART
    with relaxation,
    after $\ell=0,1,2,\ldots,10$ full cycles, with different values of
    the relaxation parameter $\rho$ and a fixed parameter $t_k$.}
  \label{fig:ex_art_prox}
\end{figure}

\subsection{Generalized ART with Regularization}

In our last experiment, we consider the TV-regularized reconstruction
problem \eqref{e-reg-ls-constrained} with nonnegativity constraints,
\ie, $\mathcal C = \{x \,|\, x \geq 0 \}$.  We use the problem
parameters $N=512$, $r = 724$, $p = 60$, and $\eta = 0.01$, and we
split $A$ into $p$ blocks $A_1,\ldots, A_p$ (one for each parallel
projection) and define
  \[
    g_i(x) = \nicefrac{1}{2} \|A_ix - b_i\|_2^2, \quad h_i(x) =
    \nicefrac{\lambda}{p} \psi(x), \qquad i = 1,\ldots,p.
  \]
Here $\psi(x)$
is the TV seminorm defined in \eqref{e-tv-seminorm}, and $\lambda > 0$
is the regularization parameter.  The proximal operator associated
with $g_i(x)$ is given in \eqref{e-block-art-prox}. In this example, the matrix
$A_iA_i^T + t_k^{-1}I$ is tridiagonal because of the parallel-beam geometry,
and hence the proximal operator can be evaluated efficiently.

To establish a ground truth for the purpose of evaluating the quality
of the reconstructions, we first solve the TV-regularized
least-squares problem for a number of different regularization
parameters using the primal--dual first-order method of Chambolle and
Pock \cite{ChP:11}. We obtain the best result with $\lambda^\star
\approx 18$ for which the norm of the relative error is approximately
$0.14$. We then solve the TV-regularized problem with R-IPG1 for
different values of $\rho$, using the regularization parameter
$\lambda^\star$ and cyclic control.
Furthermore, we use a diminishing step-size sequence
in which the parameter $t_k$ is fixed throughout a complete cycle,
\ie, $t_k = t_0 \lceil \nicefrac{k}{p} \rceil^{-1}$ for $k=1,2,\ldots$,
where $t_0$ is the initial value.

\begin{figure}
  \centering
  \input{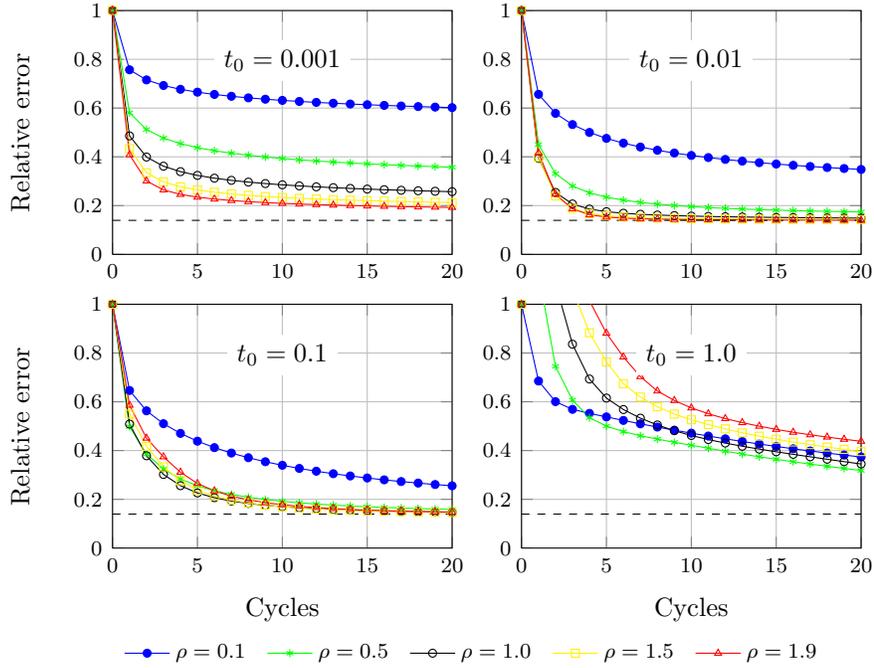}
  \caption{The relative error $\|x_{\ell m} -
    x^{\mathrm{exact}}\|_2/\|x^{\mathrm{exact}}\|_2$ for damped block
    ART with TV regularization after $\ell = 0,1,2,\ldots 20$ full cycles
    with different values
    of $\rho$ and $t_0$. The dashed line marks the norm of the
    relative error of the reference-solution to the TV-regularized least-squares
    problem obtained using the primal--dual first-order method of
    Chambolle and Pock \cite{ChP:11}.}
  \label{fig:ex_bipgtv}
\end{figure}

The plots in Figure~\ref{fig:ex_bipgtv} show our results.
We see that the initial rate of convergence
strongly depends on the choice of both the relaxation parameter $\rho$
and the initial parameter $t_0$.
Nevertheless, with suitable $\rho$ and $t_0$, the method exhibits very fast
initial convergence and achieves a reasonably accurate approximate
solution after only about 10 cycles. In this example, we obtained the
best results with overrelaxation.
This can be seen from the plot in
Figure~\ref{fig:ex_bipgtv_step} which shows the norm of the relative
error after 20 cycles for different values of $\rho$ and $t_0$. It is
also clear from this plot that, in this example, the overrelaxed method finds a
reasonably accurate approximate solution within 20 cycles when $t_0$
is between approximately 0.01 and 0.1.

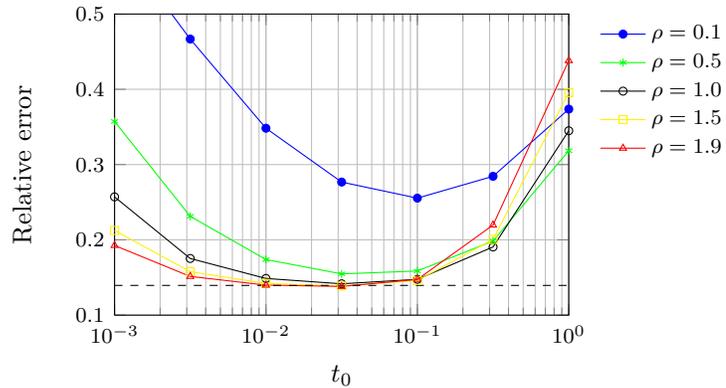
\begin{figure}
  \centering
  \begin{tikzpicture}
 
  \begin{semilogxaxis}[
    name=plot1,
    font=\scriptsize,
    xlabel={\small $t_0$},
    ylabel={\small Relative error},
    xmin=1e-3,
    xmax=1e0,
    ymin=0.1,
    ymax=0.5,
    width=3.0in,
    height=2.2in,
    grid=both,
    legend style={overlay, draw=none,cells={anchor=west},anchor=north west, at={(1.05,1)},font=\scriptsize}
    ]

    \addplot [color=blue,mark=*,mark size=1.5pt] coordinates {
(1.0000e-03,6.0189e-01)(3.1623e-03,4.6671e-01)(1.0000e-02,3.4828e-01)(3.1623e-02,2.7676e-01)(1.0000e-01,2.5543e-01)(3.1623e-01,2.8443e-01)(1.0000e+00,3.7367e-01)
    };
    \addlegendentry{$\rho=0.1$}
    \addplot [color=green,mark=asterisk,mark size=1.5pt] coordinates {
(1.0000e-03,3.5729e-01)(3.1623e-03,2.3126e-01)(1.0000e-02,1.7383e-01)(3.1623e-02,1.5477e-01)(1.0000e-01,1.5862e-01)(3.1623e-01,1.9843e-01)(1.0000e+00,3.1840e-01)
    };
    \addlegendentry{$\rho=0.5$}
    \addplot [color=black,mark=o,mark size=1.5pt] coordinates { 
(1.0000e-03,2.5708e-01)(3.1623e-03,1.7525e-01)(1.0000e-02,1.4872e-01)(3.1623e-02,1.4163e-01)(1.0000e-01,1.4753e-01)(3.1623e-01,1.9058e-01)(1.0000e+00,3.4493e-01)
    };
    \addlegendentry{$\rho=1.0$}
    \addplot [color=yellow,mark=square,mark size=1.5pt] coordinates {
(1.0000e-03,2.1250e-01)(3.1623e-03,1.5770e-01)(1.0000e-02,1.4214e-01)(3.1623e-02,1.3871e-01)(1.0000e-01,1.4596e-01)(3.1623e-01,2.0053e-01)(1.0000e+00,3.9562e-01)
    };
    \addlegendentry{$\rho=1.5$}
    \addplot [color=red,mark=triangle,mark size=1.5pt] coordinates { 
(1.0000e-03,1.9258e-01)(3.1623e-03,1.5142e-01)(1.0000e-02,1.3988e-01)(3.1623e-02,1.3810e-01)(1.0000e-01,1.4745e-01)(3.1623e-01,2.1966e-01)(1.0000e+00,4.3812e-01)
    };
    \addlegendentry{$\rho=1.9$}

   \addplot [color=black, dashed] coordinates {(1e-3,1.3949e-01)(1e0,1.3949e-01)};
  \end{semilogxaxis}

\end{tikzpicture}
  \caption{The relative error $\|x_{\ell m} -
    x^{\mathrm{exact}}\|_2/\|x^{\mathrm{exact}}\|_2$ for damped block
    ART with TV regularization after $\ell = 20$ cycles for different
    values of $t_0$. The dashed line marks the relative error of the reference.}
  \label{fig:ex_bipgtv_step}
\end{figure}

\section{Conclusions}\label{sec-conclusions}
This work contributes to existing knowledge on row-action methods by
providing an extension of the incremental proximal gradient framework
of Bertsekas. By adding a relaxation parameter this framework, we have
shown that it is possible to interpret many well-known row-action
methods for tomographic imaging as incremental methods for solving
some convex optimization problem. More importantly, the framework also
allows us to derive new \emph{generalized} row-action methods that are
based on \emph{generalized projections} (\ie, proximal
operators). We demonstrated this with several examples, including
new ART-like methods for robust regression and regularized regression.

Our numerical experiments suggest that with suitably chosen
parameters, the relaxed incremental proximal gradient methods can
obtain good approximate solutions in a small number of cycles, even
for problems that involve a nonsmooth regularization term such as
TV\@. Interestingly, in most cases we obtained the best results using
either under- or overrelaxation which underlines the practical
importance of relaxation. However, the question of how to choose
algorithm parameters for a given problem remains a difficult one, and
further work is needed to investigate this issue.



\appendix
\section{Proof of Proposition~\ref{prop-ripg-bound}}
\label{app-proof-prop}

We start by proving \eqref{e-prop-ripg-bound} for the
iteration \eqref{e-ripg1}. Let $y$ denote a vector that belongs to
$\mathcal C$. Then, using the nonexpansiveness of the projection
operator $\PC$ and \eqref{e-ripg1-3}, we have
\begin{align} \label{e-ripg1-proof-1}
  \| x_{k+1} - y \|_2^2 &\leq \rho^2 \|z_{k} - y\|_2^2 + (1-\rho)^2
  \|x_{k} - y\|_2^2 + 2\rho(1-\rho)(z_{k} - y)^T(x_{k} - y).
\end{align}
It follows from \eqref{e-ripg1-1} that $w_k = x_k - t_k \tnabla
g_{i_k}(w_k)$ for some $\tnabla g_{i_k}(w_k) \in \partial
g_{i_k}(w_k)$, and combining this with \eqref{e-ripg1-2}, we get
\begin{align*}
  z_k - y = x_k - y - t_k \tnabla g_{i_k}(w_k) - t_k \tnabla h_{i_k}(w_k).
\end{align*}
Taking inner products with first $z_k-y$ and then $x_k - y$ on both sides of this
equation, we obtain the following expression
\begin{align*}
  2(z_k-y)^T(x_k-y) = \|x_k-y\|_2^2 + \|z_k-y\|_2^2 +
  t_k\tnabla f_{i_k}(w_k)^T (z_k-x_k)
\end{align*}
where $\tnabla f_{i_k}(w_k) = \tnabla g_{i_k}(w_k) +
\tnabla h_{i_k}(w_k)$.
 Using this result in
\eqref{e-ripg1-proof-1} and by substituting $x_k -t_k \tnabla f_{i_k}(w_k)$ for $z_k$, we obtain the inequality
\begin{align}\label{e-ripg1-proof-2}
\| x_{k+1} - y\|_2^2 &\leq \rho \| x_k  - y -t_k \tnabla f_{i_k}(w_k) \|_2^2 +
(1-\rho) \|x_{k} - y \|_2^2 - \rho
(1-\rho) t_k^2 \| \tnabla f_{i_k}(w_{k}) \|_2^2.
\end{align}
Expanding the first term on the right-hand side of this inequality yields
\begin{align}\label{e-ripg1-proof-3}
\rho \| x_k -y  - t_k \tnabla f_{i_k}(w_k)\|_2^2 = \rho \|x_k -y
  \|_2^2 + \rho \|\tnabla f_{i_k}(w_k) \|_2^2 - 2\rho t_k \tnabla f_{i_k}(w_k)^T(x_k - y)
\end{align}
and furthermore, using the definition of a subgradient of $f_{i_k}$ at $w_k$, the
last term on the right-hand side of \eqref{e-ripg1-proof-3} can be
bounded from above as
\begin{align}\label{e-ripg1-proof-4}
  -2\rho t_k \tnabla f_{i_k}(w_k)^T(x_k-y) \leq -2\rho t_k
  (f_{i_k}(w_k) - f_{i_k}(y)) + 2\rho t_k \tnabla
  f_{i_k}(w_k)^T(w_k - x_k) .
\end{align}
Combining \eqref{e-ripg1-proof-3}, \eqref{e-ripg1-proof-4}, and
\eqref{e-ripg1-proof-2}, we get
\begin{align}
\notag
    \|x_{k+1}-y\|_2^2 & \leq \|x_{k} - y\|_2^2  - 2\rho t_k (f_{i_k}(w_k)
    - f_{i_k}(y)) \\
\label{e-ripg1-proof-5}
& \qquad + \rho^2 t_k^2 \| \tnabla f_{i_k}(w_{k})
    \|_2^2  + 2\rho t_k \tnabla f_{i_k}(w_{k})^T (w_{k} - x_k )
\end{align}
and using \eqref{e-ripg1-1} and the definition of
$\tnabla f_{i_k}(w_k)$, we can express the last two terms on the
right-hand side as
\begin{align*}
    \rho t_k^2 \left(\rho \|\tnabla
      h_{i_k}(w_k) \|_2^2 -(2-\rho) \|\tnabla g_{i_k}(w_k)\|_2^2 + 2(1-\rho) \tnabla g_{i_k}(w_k)^T\tnabla
      h_{i_k}(w_k)   \right).
\end{align*}
Using the Cauchy-Schwartz inequality and the inequality $\|\tnabla
h_{i_k}(w_k)\|_2 \leq c$ from Assumption \ref{assumption-ripg1}, we
obtain the bound
\begin{multline*}
\rho^2 t_k^2 \| \tnabla f_{i_k}(w_{k})
    \|_2^2  + 2\rho t_k \tnabla f_{i_k}(w_{k})^T (w_{k} - x_k )
    \leq \\
    \rho t_k^2 \left(\rho c^2 -(2-\rho) \|\tnabla g_{i_k}(w_k)\|_2^2 +
      2c |1-\rho | \| \tnabla g_{i_k}(w_k) \|_2   \right).
\end{multline*}
This is a concave function of $\| \tnabla g_{i_k}(w_k) \|_2$ for
$\rho \in [\delta,2-\delta]$, and we obtain a simpler bound
by maximizing over $\|\tnabla g_{i_k}(w_k)\|_2 \leq c$. Thus, if we let
\[ \|  \tnabla g_{i_k}(w_k) \|_2 =  c\, \min\{ |1-\rho|/(2-\rho), 1 \} \]
we obtain the bound
\begin{align}\label{e-ripg1-proof-6}
  \rho^2 t_k^2 \| \tnabla f_{i_k}(w_{k})
    \|_2^2  + 2\rho t_k \tnabla f_{i_k}(w_{k})^T (w_{k} - x_k )
    \leq \rho t_k^2 c^2 \alpha(\rho)
\end{align}
where
\[ \alpha(\rho) =
\begin{cases}
  1/(2-\rho) & \delta \leq \rho \leq 3/2 \\
  4(\rho-1)  & 3/2 \leq \rho \leq 2-\delta.
\end{cases}
\]
Combining \eqref{e-ripg1-proof-5} and \eqref{e-ripg1-proof-6} yields
\begin{align}\label{e-ripg1-proof-7}
\|x_{k+1} - y \|_2^2 & \leq   \|x_{k} - y\|_2^2 - 2\rho t_k(
f_{i_k}(w_{k}) - f_{i_k}(y)) +\rho t_k^2 c^2 \alpha(\rho).
\end{align}
Applying this bound recursively, and since the index sequence $\{i_k\}$
is cyclic, we have
\begin{align}
\notag
  \| x_{k+m} - y\|_2^2 \leq \|x_{k} - y\|_2^2 - 2\rho t_k (f(x_{k}) - f(y)) + m \rho t_k^2 c^2 \alpha(\rho) \\
   +2\rho t_k \sum_{j=1}^m \left(  f_{j}(x_{k}) - f_{j}(w_{k+j-1} )\right).
\end{align}
We can upper bound $f_{j}(x_{k}) - f_{j}(w_{k+j-1} )$ using Assumption \ref{assumption-ripg1}, \ie,
\begin{align}
  f_j(x_{k}) - f_j(w_{k+j-1}) \leq 2c\, \|x_{k} - w_{k+j-1} \|_2.
\end{align}
Furthermore, from the triangle inequality we have
\begin{align}
  \| x_{k} - w_{k+j-1} \|_2 \leq \| x_{k} -x_{k+1} \|_2 + \ldots +
  \|x_{k+j-2} - x_{k+j-1} \|_2 + \|x_{k+j-1} - w_{k+j-1} \|_2
\end{align}
and using \eqref{e-ripg1-proof-1}, the first $j-1$ right-hand side terms can be
bounded using the inequality
\begin{align}
  \| x_{k} - x_{k+1} \|_2 &\leq \rho \| x_{k} - z_{k} \|_2 = \rho t_k \| \tnabla g_{i_k}(w_{k}) + \tnabla
  h_{i_k}(w_{k}) \|_2 \leq 2\rho t_k c.
\end{align}
Similarly, from \eqref{e-ripg1-1}, we have $\| w_{k} - x_{k} \|_2 \leq
t_k c$, and hence
\begin{align}\label{e-ripg1-proof-8}
 \| x_{k} - w_{k+j-1} \|_2 \leq  2(j-1)\rho t_k c + t_k c
\end{align}
and
\begin{align}\label{e-ripg1-proof-9}
  2\rho t_k \sum_{j=1}^m \left(
    f_{j}(x_{k}) - f_{j}(w_{k+j-1} )\right) \leq 4\rho t_k^2 c^2
  \sum_{j=1}^m (2(j-1)\rho + 1) = 4\rho t_k^2 c^2 (\rho m^2 +
  (1-\rho)m).
\end{align}
Combining \eqref{e-ripg1-proof-7}, \eqref{e-ripg1-proof-8}, and
\eqref{e-ripg1-proof-9}, we obtain the desired result \eqref{e-prop-ripg-bound}.

We now prove \eqref{e-prop-ripg-bound} for the iteration \eqref{e-ripg2}.
Using \eqref{e-ripg2-1} and \eqref{e-ripg2-2}, we have that
\begin{align*}
  z_{k} - y = x_{k} - y - t_k \tnabla g_{i_k}(z_{k}) - t_k
\tnabla h_{i_k}(x_{k}).
\end{align*}
Taking inner products on both sides of this equation with first $z_{k}-y$ and then
$x_{k}-y$, we obtain the equations
\begin{align*}
  (z_{k} - y)^T(x_{k} - y) &= \| z_{k} - y\|_2^2 + t_k (z_{k} - y)^T ( \tnabla g_{i_k}(z_{k}) +\tnabla h_{i_k}(x_{k})) \\
(z_{k} - y)^T(x_{k} - y) & = \| x_{k} - y\|_2^2 - t_k(x_{k} - y)^T( \tnabla g_{i_k}(z_{k}) + \tnabla h_{i_k}(x_{k}))
\end{align*}
and adding these yields
\begin{align}\label{e-ripg2-proof-1}
  2(z_{k} - y)^T(x_{k} - y) = \| x_{k} - y\|_2^2 + \| z_{k} -  y\|_2^2 + t_k (z_{k} - x_{k})^T ( \tnabla g_{i_k}(z_{k}) + \tnabla h_{i_k}(x_{k})).
\end{align}
Using this in \eqref{e-ripg1-proof-1} (which holds for both R-IPG1 and
R-IPG2), we have that
\begin{align}
\label{e-ripg2-proof2}
  \| x_{k+1} - y \|_2^2 &\leq \rho \|z_{k} - y\|_2^2 + (1-\rho)
  \|x_{k+1} - y\|_2^2 - \rho(1-\rho) t_k^2\|\tnabla g_{i_k}(z_{k}) + \tnabla h_{i_k}(x_{k})\|_2^2
\end{align}
where the first term on the right-hand side can be expressed as
\begin{align*}
  \rho \left( \|x_k - y\|_2^2 + t_k^2 \|\tnabla g_{i_k}(z_k) + \tnabla
  h_{i_k}(x_k)\|_2^2 - 2(\tnabla g_{i_k}(z_k) + \tnabla
  h_{i_k}(x_k))^T(x_k-y) \right)
\end{align*}
since $z_k = x_k - t_k \tnabla h_{i_k}(x_k) - t_k \tnabla
g_{i_k}(z_k)$. Thus,
\begin{align} \notag
  \| x_{k+1} - y \|_2^2 &\leq  \|x_{k} - y\|_2^2 - 2\rho t_k \tnabla  h_{i_k}(x_{k})^T(x_{k} - y) - 2\rho t_k
\tnabla g_{i_k}(z_{k})^T(z_{k} - y) \\
\notag
& \qquad
+ \rho  t_k^2 \left( \| \tnabla
  h_{i_k}(x_{k}) \|_2^2 -  \|\tnabla g_{i_k}(z_{k}) \|_2^2  - (1-\rho) \|\tnabla g_{i_k}(z_{k}) + \tnabla
h_{i_k}(x_{k})\|_2^2 \right)
\end{align}
and using the definition of the subdifferentials $\partial
g_{i_k}(z_k)$ and $\partial h_{i_k}(x_k)$ together with \eqref{e-ripg1-proof-6}, we obtain
\begin{align}
\notag
 \| x_{k+1} - y \|_2^2 & \leq \|x_{k} - y\|_2^2 -2\rho t_k (f_{i_k}(x_{k}) -
f_{i_k}(y)) + \rho t_k^2 c^2 \alpha(\rho)  \\
& \qquad + 2\rho t_k (g_{i_k}(x_{k}) - g_{i_k}(z_{k})).
\end{align}
Consequently, after a complete cycle (\ie, $m$ iterations), we have
\begin{align}
\notag
  \| x_{k+m} - y \|_2^2& \leq \|x_{k} - y \|_2^2 - 2\rho t_k
  (f(x_{k}) - f(y)) + m\rho t_k^2c^2\alpha(\rho) \\
\notag
  & \qquad +2\rho t_k
  \sum_{j=1}^m \left( f_j(x_{k}) - f_j(x_{k+j-1}) \right)\\
\label{e-ripg2-proof-2}
 &\qquad +2\rho t_k
  \sum_{j=1}^m \left( g_j(x_{k+j-1}) - g_j(z_{k+j-1}) \right).
\end{align}
Now, using Assumption \ref{assumption-ripg2}, we can bound $ f_j(x_{k}) -
f_j(x_{k+j-1})$ above as
\begin{align*}
  f_j(x_{k}) - f_j(x_{k+j-1}) \leq 2 c\, \| x_{k} - x_{k+j-1} \|_2
\end{align*}
where
\begin{align*}
  \| x_{k} - x_{k+j-1} \|_2 \leq \| x_{k} - x_{k+1} \|_2 + \cdots
  + \| x_{k+j-2} - x_{k+j-1} \|_2
\end{align*}
and
\[ \| x_{k} - x_{k+1} \|_2 \leq \rho \|z_{k} - x_{k}\|_2 = \rho t_k \|
\tnabla g_{i_k}(z_{k}) + \tnabla h_{i_k}(x_{k}) \|_2
\leq 2\rho t_k c  \]
and this implies that
\begin{align}\label{e-ripg2-proof-3}
  2\rho t_k
  \sum_{j=1}^m \left( f_j(x_{k}) - f_j(x_{k+j-1}) \right) \leq
  4\rho^2 t_k^2c ^2(m^2-m).
\end{align}
Similarly, using Assumption \ref{assumption-ripg2}, we have that
\begin{align*}
  g_{i_k}(x_{k+j-1}) - g_{i_k}(z_{k+j-1}) &\leq c\, \| x_{k+j-1}
  - z_{k+j-1} \|_2 \\
  &  \leq t_k c \, \| \tnabla g_{i_k}(z_{k+j-1}) +
  \tnabla h_{i_k}(x_{k+j-1}) \|_2 \\
  & \leq 2 t_k c^2
\end{align*}
and hence
\begin{align}\label{e-ripg2-proof-4}
  2\rho t_k
  \sum_{j=1}^m \left( g_j(x_{k+j-1}) - g_j(z_{k+j-1}) \right) \leq
  4m \rho t_k^2 c^2.
\end{align}
Combining \eqref{e-ripg2-proof-2}, \eqref{e-ripg2-proof-3}, and
\eqref{e-ripg2-proof-4}, we get the desired result \eqref{e-prop-ripg-bound}.

\bibliographystyle{plain}
\bibliography{literature}

\end{document}